%
%
\documentclass[reqno,12pt]{amsart}



\usepackage{color} 
\usepackage{ifpdf}
\ifpdf
    \usepackage[pdftex]{graphicx}
    \usepackage[pdftex]{hyperref}
    \hypersetup{
        unicode=false,          
        pdftoolbar=true,        
        pdfmenubar=true,        
        pdffitwindow=false,     
        pdfstartview={FitH},    
        pdftitle={MCP Article},      
        pdfauthor={Sara Pollock},   
        pdfsubject={Mathematics},    
        pdfcreator={Sara Pollock},  
        pdfproducer={Sara Pollock}, 
        pdfkeywords={PDE, analysis, numerical analysis}, 
        pdfnewwindow=true,      
        colorlinks=true,        
        linkcolor=red,          
        citecolor=blue,         
        filecolor=magenta,      
        urlcolor=cyan           
    }

    \typeout{====== Invoked by pdflatex ======================}
\else
    \usepackage{graphicx}
    \usepackage{pstricks}
    
    \newcommand{\href}[2]{#2}
    \typeout{====== Invoked by latex ======================}
\fi

\usepackage{times}
\usepackage{amsfonts}
\usepackage[font={small}]{caption}

\usepackage{amsmath}
\usepackage{amsthm}
\usepackage{amssymb}
\usepackage{amsbsy}
\usepackage{amscd}
\usepackage{extarrows}

\usepackage{enumerate}
\usepackage{verbatim}
\usepackage{subfigure}




\newtheorem{theorem}{Theorem}[section]
\newtheorem{corollary}[theorem]{Corollary}
\newtheorem{lemma}[theorem]{Lemma}

\newtheorem{assumption}[theorem]{Assumption}
\newtheorem{proposition}[theorem]{Proposition}

\newtheorem{definition}[theorem]{Definition}
\newtheorem{example}[theorem]{Example}
\newtheorem{remark}[theorem]{Remark}

\newtheorem{algorithm}[theorem]{Algorithm}
\newtheorem{criteria}[theorem]{Criteria}

\numberwithin{equation}{section}  





  \newcounter{mnote}
  \setcounter{mnote}{0}
  
  \let\oldmarginpar\marginpar
    \renewcommand\marginpar[1]{\-\oldmarginpar[\raggedleft\footnotesize #1]%
    {\raggedright\footnotesize #1}}

\newenvironment{enumerateY}
{\begin{list}{{\it(\roman{enumii})}}{
\usecounter{enumii}
\leftmargin 2.5em\topsep 0.5em\itemsep -0.0em\labelwidth 50.0em}}
{\end{list}}

\newenvironment{enumerateX}
{\begin{list}{\arabic{enumi})}{
\usecounter{enumi}
\leftmargin 2.5em\topsep 0.5em\itemsep -0.0em\labelwidth 50.0em}}
{\end{list}}

\newenvironment{itemizeX}
{\begin{list}{\labelitemi}{
\leftmargin 2.5em\topsep 0.5em\itemsep -0.0em\labelwidth 50.0em}}
{\end{list}}


\definecolor{myblue}{rgb}{0.2,0.2,0.7}
\definecolor{mygreen}{rgb}{0,0.6,0}
\definecolor{mycyan}{rgb}{0,0.6,0.6}
\definecolor{myred}{rgb}{0.9,0.2,0.2}
\definecolor{mymagenta}{rgb}{0.9,0.2,0.9}
\definecolor{mywhite}{rgb}{1.0,1.0,1.0}
\definecolor{myblack}{rgb}{0.0,0.0,0.0}







%
     
      
%
\renewcommand{\div}{{\operatorname{div}}}
\newcommand{\eps}{\varepsilon}

\newcommand{\GM}{\gamma_{\text{MAX}}}
\newcommand{\gmo}{\gamma_{\text{MONO}}}
\newcommand{\dmin}{\delta_{\text{MIN}}}
\newcommand{\econ}{\eps_{\text{CON}}}
\newcommand{\imax}{I_{\text{MAX}}}
\newcommand{\ibase}{I_{\text{BASE}}}
\newcommand{\iacc}{I_{\text{ACC}}}
\newcommand{\qacc}{q_{\text{ACC}}}



\newcommand{\PP}{{\mathbb P}}       
\newcommand{\R}{{\mathbb R}}       


\newcommand{\cL}{{\mathcal L}}
\newcommand{\cM}{{\mathcal M}}

\newcommand{\cS}{{\mathcal S}}
\newcommand{\cT}{{\mathcal T}}

\DeclareMathAlphabet{\mathpzc}{OT1}{pzc}{m}{it}


\newcommand{\f}{\frac}



\newcommand{\forr}{\text{ for }}

\newcommand{\on}{\text{ on }}

\newcommand{\an}{\text{ and }}

\newcommand{\with}{\text{\ with }}

\newcommand{\inn}{\text{ in }}

\newcommand{\tforall}{\text{ for all }}



\newcommand{\rest}{\big|}
\newcommand{\Rest}{\Big|}






\newcommand{\grad}{\nabla} 


\newcommand{\goto}{\rightarrow}


\newcommand{\norm}[1]{\ensuremath{\lVert{#1} \rVert}}
\newcommand{\nr}[1]{\norm{#1}} 


\newcommand{\anorm}{\norm{\ \cdot \ }}

\newcommand{\pa}{\partial}



 


\definecolor{blue}{rgb}{0.2,0.2,0.7}
\definecolor{red}{rgb}{0.7,0.3,0.1}
\definecolor{cyan}{rgb}{0.2,0.5,0.6}
\usepackage{mathtools}
\usepackage{stmaryrd}

\setlength{\topmargin}{-0.5in}
\setlength{\textheight}{9.5in}
\setlength{\textwidth}{5.85in}
\setlength{\oddsidemargin}{0.325in}
\setlength{\evensidemargin}{0.325in}
\setlength{\marginparwidth}{1.0in}

\setcounter{tocdepth}{2}

\begin{document}

\title[Layer capturing adaptive methods]
      {Stabilized and inexact adaptive methods for capturing internal layers
in quasilinear PDE}

\author[S. Pollock]{Sara Pollock}
\email{snpolloc@math.tamu.edu}

\address{Department of Mathematics\\
         Texas A\&M University\\ 
         College Station, TX 77843}


\date{\today}

\keywords{Nonlinear diffusion,
quasilinear equations, 
adaptive methods. 
pseudo-transient continuation,
Newton-like methods,
inexact methods,
regularization
}


\begin{abstract}
A method is developed within an adaptive framework to solve quasilinear
diffusion problems with internal and possibly boundary layers starting from
a coarse mesh.
The solution process is assumed to start on a mesh where the problem
is badly resolved, and approximation properties of the exact problem and its
corresponding finite element solution do not hold. A
sequence of stabilized and inexact  partial solves allow the mesh to be 
refined to capture internal layers while an approximate solution is 
built eventually leading to an accurate approximation of both the problem 
and its solution.  The innovations in the current work include a closed form
definition for the numerical dissipation and inexact scaling parameters on 
each mesh refinement, as well as a convergence result for the residual of 
the discrete problem.  Numerical experiments demonstrate the method on a range
of problems featuring steep internal layers and high solution dependent 
frequencies of the diffusion coefficients.
\end{abstract}

\maketitle




\section{Introduction}
This investigation continues the work in~\cite{Pollock14a,Pollock15a}
developing adaptive numerical methods for quasilinear partial differential
equations featuring steep
internal layers, in which the solution process
starts on a coarse mesh where the problem is not yet resolved.
Throughout the coarse mesh and preasymptotic regimes, standard methods such 
as Newton iterations are known to fail due to both the ill-conditioned and 
possibly indefinite Jacobians which are characteristic of the approximate
discrete problems, and the partial resolution of the problem data.
This paper
specifies an appropriate set of parameters that may be used in the
stabilized 
$\sigma$-Newmark strategy of~\cite{Pollock15a} 
applied to quasilinear diffusion problems, where the layers develop 
from both the 
solution dependent coefficients and a variable dependent source. 
For the class of
problems studied here 
\begin{align}\label{eqn:introclass1}
-\div(\kappa(u)\grad u) - f(x) &= 0, ~u = 0 \on \pa \Omega, \quad \an
\\ \label{eqn:introclass2} 
-\div(\kappa(|\grad u|^2) \grad u) - f(x) &= 0, ~u = 0 \on \pa \Omega,
\end{align}
in which the diffusion is bounded away from zero,
local uniqueness of the solution is known, as well as approximation properties
for the finite element solution using linear elements~\cite{Xu96}, assuming
the mesh is sufficiently fine.  
For operators containing steep solution-dependent layers in the coefficients, the
approximation properties of the discrete solution are useful only if the 
solution to the discrete nonlinear problem can be attained, and the current 
method attains such a solution by means of a sequence of approximate problems
with inexact source functions that limit to the discrete problem.   

The methodology is to first discretize 
~\eqref{eqn:introclass1} and~\eqref{eqn:introclass2} on each mesh refinement
then linearize the resulting discrete problem, and to partially 
solve the sequence of resulting inaccurate and ill-conditioned
coarse mesh problems by stabilized Newton-like iterations
while adaptively refining the mesh leading to
an accurate and efficient solve of an accurate discretization of the problem.
In previous
work the focus was on the stabilization of the Jacobian by a combination of 
regularization and added numerical dissipation.  Currently, a formula for 
the numerical dissipation parameter is presented, along with a new inexact method
designed for problems where the variable-dependent source dominates the
residual of the Newton-like iterations.

Iterative rescaling techniques in the solution of nonlinear problems are not 
uncommon, see for instance the scaling iterative algorithm (SIA) of 
~\cite{ChZhNi00} for the solution of semilinear elliptic problems, in which the
solution $u$ is rescaled at each iteration. 
The rescaling of the Monge-Amp\`ere equation in ~\cite{Awanou14a} to establish a 
fixed-point argument and recover a numerical solution without having to
assume the solution is small enough motivated the current approach.
Here, the inexact method 
rescales the variable dependent source until the solution iterates
attain sufficient stability to solve for the given data.   

Recent approaches such as~\cite{AlErVo11,ErnVor13} for
monotone quasilinear problems use 
inexact linear and nonlinear solves to avoid over-solving for the residual when
the Galerkin or discretization error are the dominant sources of error.
It is assumed it their analysis that the discrete problem on each 
refinement is well posed.  
In the problems studied here, the coarse mesh problem may not be well posed, 
and may be a  
sufficiently bad approximation of the exact problem that estimates of the 
different error sources are not necessarily well determined or useful.  
Instead, the iterations are ended when they stabilize to the predicted linear 
convergence rate, which is a function of the numerical dissipation parameter.  
Combined with the criteria that the residual from the linear solves on each mesh 
refinement must show sufficient decrease with respect to the residual on the 
previous refinement, the sequence of stabilized and inexact problems recovers
the unscaled discrete problem on a mesh where it is better represented, and with
an initial guess for the Newton-like iterations that yields the 
discrete problem solvable. 
This method predicts the stability of the solve and allows the sequence 
of coarse mesh
problems to be solved approximately through the preasymptotic regime leading 
to an efficient solve in the asymptotic regime.
  
The remainder of the paper is organized as follows.   
Section~\ref{sec:problem_class} states the target problem class and the
formulation of the discrete problems. 
Section~\ref{sec:stableadaptive} reviews the Jacobian stabilization techniques
developed by the author in previous work, and which are further developed here.
Section~\ref{sec:gamma_update} presents a formulation for the numerical
dissipation parameter $\gamma$ and characterizes its properties within the
adaptive framework; then Section~\ref{sec:delta_update} presents a formulation
for the inexact scaling parameter $\delta$ and characterizes its convergence to
unity within the adaptive method.  Section~\ref{sec:algorithm} summarizes the
results of the previous three sections into an adaptive algorithm and proves 
the convergence of the residual of the discrete problem.  Finally, 
Section~\ref{sec:numerics} demonstrates the method with a collection of
numerical experiments featuring different types of internal layers.

The following notation is used throughout the rest of the paper. 
In defining the weak and bilinear forms in the next section 
$(u(x) , v(x) ) = \int_\Omega u(x) v(x)\, dx$, and in later section 
the discrete inner product between vectors $u_k, v_k \in \R^n$ is  denoted
$\langle u_k, v_k \rangle$.
The norm $\anorm$ where not otherwise specified is the $L_2$ norm.
The $n$th iterate subordinate to the $k$th partition $\cT_k$ is denoted 
$u^n_k$, while $u^n$ is the $n$th iteration on a fixed partition and $u_k$ is 
the final iteration on the $k$th mesh, 
taken as the approximate solution on $\cT_k$.  
\section{Target problem class}\label{sec:problem_class}
The class of problems considered are quasilinear diffusion problems 
$F(u,x)= 0$, over polygonal domain $\Omega \subset \R^2$,
with $F: X \times \Omega 
\goto Y^\ast$ and $F'(u,x) \coloneqq F_u(u,x)  \in \cL(X, Y^\ast)$, 
where $F(u,x)$
is given by
\begin{align}\label{eqn:quasiclass1}
F(u,x)& \coloneqq -\div(\kappa(u) \grad u) - f(x) = 0, 
\inn \Omega \in \R^2, ~ u = 0 \on \pa \Omega, \text{ or } 
\\ \label{eqn:quasiclass2}
F(u,x) & \coloneqq-\div(\kappa(|\grad u|^2) \grad u) - f(x)=0, 
\inn \Omega \in \R^2,~ u = 0 \on \pa \Omega,
\end{align}
with $f(x) \in L_2(\Omega)\cap L_\infty(\Omega)$.
Multiplication against test functions $v \in Y$ and integration by parts
yields the weak form of each problem
\begin{align}\label{eqn:weakform1}
B(u,v) & = (\kappa(u) \grad u, \grad v) = (f,v), ~\tforall v \in Y, 
&\forr~\eqref{eqn:quasiclass1}, 
\\ \label{eqn:weakform2}
B(u,v) & = (\kappa(|\grad u|^2)\grad u, \grad v) = (f,v) ~\tforall v \in Y,
&\forr~\eqref{eqn:quasiclass2}.
\end{align}
The linearized form induced by $F'(u,x)\coloneqq F_u(u,x)$, is determined by taking the 
Gateaux derivative in 
direction $w \in X$ by $B'(u; w, v) =  \lim_{t \goto 0}\, d/dt\, ( B(u + tw, v))$ 
yielding 
\begin{align}\label{linearform1}
B'(u;w,v) & = (\kappa(u) \grad w, \grad v) + (\kappa'(u) w \grad u, \grad v), 
&\forr~\eqref{eqn:quasiclass1}, 
\\ \label{eqn:linearform2}
B'(u;w,v) & = (\kappa(|\grad u|^2)\grad w, \grad v) + 
           (\kappa'(|\grad u|^2)(2 \grad u\cdot \grad w) \grad u, \grad v),
&\forr~\eqref{eqn:quasiclass2}.
\end{align}
Both types of problems fit into the context of~\cite{Xu96} with the 
assumption that there is a solution $u \in H_0^1(\Omega) \cap W_{2+\eps}^2(\Omega)$
and
$F_u(u,x): H_0^1(\Omega) \goto H^{-1}(\Omega)$ is an isomorphism, 
in which case the solution $u$ is an isolated solution, and approximation
properties for the linear Lagrange finite element solution can be shown to hold, 
assuming the meshsize is fine enough.

It is further shown in~\cite{CaRa94} 
for problems of the form~\eqref{eqn:quasiclass1}, assuming 
$\kappa(s) \ge k > 0$ for all $s \in \R$ and $\kappa(s), \kappa'(s),
\kappa''(s)$ bounded, 
then there exists a unique solution 
$u \in W^{1,p}(\Omega)$, with $2 < p < \infty$.  
Moreover, by the analysis of~\cite{HTZ09a}, an adaptive method using 
linear Lagrange finite elements can be shown to converge, again assuming
the meshsize is sufficiently small.

The second class of equations~\eqref{eqn:quasiclass2} is considered in
for instance~\cite{GaMoZu11,BDK12,AlErVo11} and the reference therein, 
and uniqueness of solutions as well as convergence of
adaptive methods have been established supposing a 
Lipchitz condition and a monotonicity
condition such as $B(v,v-w) - B(w,v-w) \ge 0$ or the strong monotonicity 
condition $B(v,v-w) - B(w,v-w) \ge c \nr{v-w}^2_Y$ as in~\cite{GaMoZu11},
and assuming the approximate discrete problems are well posed.

\subsection{Discrete formulation}\label{subsec:discreteform}
The discrete problem is, find $u_h \in X_h$ such that
$B(u_h,v)=0 \tforall v \in Y_h$ where $X_h \subset X$ and $Y_h \subset Y$
are discrete spaces of the same finite dimension.
Here, $X_h$ and $Y_h$ are finite element spaces with respect to triangulation 
$\cT_h$, 
where the family of triangulations $\{\cT_h\}_{0 < h < 1}$ is regular and 
quasi-uniform  in the sense of~\cite{Ciarlet}. 
For the rest of the discussion it is assumed the trial and test spaces
are the same: $X = Y$ and $X_h = Y_h$. 
On each refinement $k$, the test/trial space $X_{h_k}$  
is taken as the linear finite element space $V_k$ consisting of 
Lagrange finite elements  $\PP_1$ over partition $\cT_k$ that satisfy 
the homogeneous Dirichlet boundary conditions.

The resulting discrete problem on refinement $k$ 
is of the form $G_k(u,x)= 0$ for $u \in V_k$ and $x \in \Omega \subset \R^2$.
The decomposition $G_k(u,x) = -g(u) + f_k(x)$ is used throughout the remainder 
of the discussion.   
On a given discretization that is not sufficiently fine, $f_k(x)$ may be a 
bad representation of the given source $f(x)$. On such a coarse mesh, solving  
the discretized equation $g(u_k) = f_k(x)$ may produce a solution 
$u_k$ which is a highly inaccurate solution to ~\eqref{eqn:weakform1}, or
respectively~\eqref{eqn:weakform2}, so seeking an exact solution 
to the approximate problem is
not a good use of computational effort.   

Ideally, the adaptive method starts on
a coarse mesh where the problem is small, if badly resolved. It produces
a sequence of approximations that are good enough to refine the mesh to resolve
the data and solution dependent coefficients, and upon sufficient resolution
and stability,
provide a good approximation for the solution to
the  discrete problem.

The features of the current method as introduced in~\cite{Pollock14a, Pollock15a}
include an adaptive approach to regularization used to stabilize the Jacobian in 
the coarse mesh and preasymptotic regimes, also conceived of as 
adaptively regulated implicit pseudo-time stepping.  

\section{Stabilized adaptive method}\label{sec:stableadaptive} 
The stabilized iterations are designed
to fit into the framework of a standard adaptive method, which uses error 
indicators to mark and refine the mesh.
The theoretical results of this paper are independent of the specifics of
the adaptive method, and a different set of error indicators could be used.  
The method here is demonstrated using standard residual based estimators,
as in ~\cite{FiVe03,Stevenson07} for mesh refinement,
and similarly for the adaptive regularization strategy.  
Both the interior and the jump part of the residual indicator
are used to refine the mesh, and a similar jump indicator based only on the 
solution is used to determine  
the regularization.
The indicators are defined with respect to the consistent 
problem with unscaled data. 
\subsection{Adaptive mesh refinement}\label{subsec:adaptiverefine}
The local \emph{a posteriori} residual-based 
indicator, and corresponding jump-based indicator for element 
$T \in \cT_k$ with $h_T$ the element diameter are given by
\begin{align}
\label{eqn:jump_indicators}
\zeta_T^2(v) &= \zeta_{\cT_k}^2(v,T) \coloneqq h_T \nr{ J_T(v)  }_{L_2(\pa T)}^2,
\\ \label{eqn:indicators}
\eta_T^2(v) &= \eta_{\cT_k}^2(v,T) \coloneqq h_T^2 \nr{F(v,x)}_{L_2(T)}^2 
+  \zeta_T^2(v),
\end{align}
$J_T(v) \coloneqq \llbracket [\kappa(v) \grad v  \cdot n \rrbracket_{\pa T}$,
for problem~\eqref{eqn:quasiclass1} and
$J_T(v) \coloneqq \llbracket [\kappa(|\grad v|^2) \grad v  \cdot n \rrbracket_{\pa T}$, for problem~\eqref{eqn:quasiclass2}, 
with jump
$\llbracket \phi \rrbracket_{\pa T} 
\coloneqq {\lim_{t \goto 0} \phi(x + t n) - \phi(x - tn)}$,
 where  $n$ is the appropriate outward normal defined on $\pa T$.  
The error estimator on partition $\cT_k$ is given by the $l_2$ sum of 
indicators $\eta_{\cT_k}^2 = \sum_{T \in \cT_k}\eta_{T}^2$, and similarly 
for $\zeta_{\cT_k}$.

In these results, the D\"orfler marking strategy is used.  Given a 
parameter $\theta \in (0,1)$ a set of least, or nearly least cardinality
is chosen for the marked set $\cM$ so the sum of their indicators
is greater than the given fraction of the sum of all the indicators. 
\begin{align}\label{eqn:dorflermark}
\sum_{T \in \cM}\eta_T^2 \ge \theta \sum_{T \in \cT_k} \eta_T^2.
\end{align}
\begin{remark}\label{remark:coarsemarking}In contrast to an 
earlier version of the method for convection diffusion problems 
in~\cite{Pollock15a}, no additional coarse mesh marking is performed.
Scaling down the variable dependent source on the initial solves
allows the method to traverse
the preasymptotic and coarse mesh regimes while minimizing refinement
that negatively impacts the efficiency of the method in the
asymptotic regime. 
\end{remark}

\subsection{Adaptive regularization}\label{subsec:adaptivereg}
The motivation from multiple perspectives
behind the adaptive regularization technique from 
is described in~\cite{Pollock14a}, and an 
updated strategy is suggested in~\cite{Pollock15a}, which is summarized below.
The coarse mesh problems may have indefinite, and often have highly 
ill-conditioned Jacobians, and the Laplacian-based regularization 
helps make the Newton-like
iterations solvable.  The adaptive element is introduced to yield faster 
convergence as the degrees of freedom for which the approximate solution 
is smooth experience little benefit from the added
stabilization as compared to the degrees of freedom (dofs) 
in regions where the jump in the 
normal derivative between neighboring elements is large. 
On each mesh partition $\cT_k$ the penalty matrix $R = R_{k}$ is a 
localized version of the Laplacian stiffness matrix denoted $R^{global}$.
First define the jump indicator used to select elements for regularization,
{\em c.f.,} \eqref{eqn:jump_indicators}.
\begin{equation}\label{eqn:regjumpindic}
\xi_T^2(v) = h_T \nr{\llbracket \grad v \cdot n\rrbracket_{\pa T}}_{L_2(\pa T)}^2.
\end{equation}
Using the jump indicators $\{\xi_T\}_{T \in \cT}$, select degrees of freedom
for regularization.
\begin{definition}\label{def:Rk}  
Define $R^{global}$ as the Laplacian stiffness matrix on refinement $k$.
Let $D^{loc}$ a diagonal matrix of zeros and ones, $v_j$ a vertex  subordinate 
to partition $\cT_k$, and $\xi_T = \xi_T(u^0_k)$. Then set
\[
\tilde \psi_k = \sqrt{\text{median}_{T \in \cT_k}(\xi_T^2)}, ~\an
\psi_k = \left\{ \begin{array}{ll}
{\tilde \psi_k}^{1/2} , & \text{ if } {\tilde \psi_k} > 1, \\
 \tilde \psi, & \text{ otherwise},
\end{array}
\right.
\]
and
\begin{align}\label{eqn:adaptivereg}
D_{jj }^{loc} = \left\{ \begin{array}{ll}
 0, & \text{ if } \xi_T \le \psi_k \text{ for each element that contains } v_j
\text{ as a vertex}, \\
1, & \text{ otherwise}.
\end{array} \right.
\end{align}
Set $R_k = D^{loc}R^{global}D^{loc}$.
\end{definition}
The regularization matrix $R_k$ defined by Definition~\ref{def:Rk} is 
positive semidefinite and targeted to smooth regions of high curvature.

\subsection{Exact and inexact $\sigma$-Newmark iterations.}\label{sec:iterations}
The $\sigma$-Newark iteration of~\cite{Pollock15a} generalizes the 
pseudo-transient continuation method of~\cite{KeKe98,CoKeKe02,FoKe05} and 
similarly the ``s" method of ~\cite{BaRo80a} to an adaptive framework, combining
stabilization by pseudo-time with the Tikhonov-type regularization as used in ill-posed problems~\cite{Engl1996}.

The Jacobian is stabilized by seeking a steady-state solution of
the given nonlinear problem $G_k(u,x) = 0$ by solving 
$\pa (Ru)/ \pa t + G_k(u,x) = 0$, and discretizing the time derivative 
$\dot u \coloneqq \pa u / \pa t$
by a Newmark update~\cite{Newmark59} given by the time discretization
$u^{n+1} = u^n + \Delta t^n \{\gamma \dot u^{n+1}  + (1 - \gamma) \dot u^n\}$, or
solving for $\dot u^{n+1}$
\[
\dot u^{n+1} = \f{u^{n+1}-u^n}{\gamma \Delta t^n} 
- \f{(1 - \gamma)}{\gamma} \dot u^n.
\]
In the current notation the regularization parameter $\alpha^n = 1/\Delta t^n$.
As discussed in the literature on the finite element analysis of structures,
for instance~\cite{HiHu78a,ChHu93} and the references therein, the update 
yields second-order accuracy in time for $\gamma = 1/2$ and increased numerical
dissipation of the high frequency modes with $\gamma > 1/2$.  The current adaptive
method allows $\gamma> 1$, which can be seen to damp the influence of the
right-hand side residual.  In~\cite{Pollock15a} a secondary linearization is
performed to further stabilize the approximate Jacobian resulting in the iteration
\begin{align}\label{eqn:stabit}
\left(\alpha^n R + \gamma^n \{\sigma^n G'(u^n) + (1-\sigma^n) 
G'(\bar u) \} \right)w^n
= -G(u^n,x), \quad u^{n+1} = u^n + w^n.
\end{align}
In previous discussions the variable dependence of $G(u,x)$ was suppressed and
written $G(u)$.  In the current presentation the variable dependent load will
be stated explicitly by $G(u,x) = g(u) - f(x)$.  As the Jacobian $G_u(u,x)
= g'(u)$ is invariant to $f(x)$, the stabilization based on the Jacobian alone
does not address sharp spikes or steep gradient which may be present in $f(x)$, 
particularly in partially resolved coarse mesh problems.  The inexact iteration
is then introduced
\begin{align}\label{eqn:inexactit}
\left(\alpha^n R + \gamma^n \{\sigma^n g'(u^n) + (1-\sigma^n) 
g'(\bar u) \} \right)w^n
= -g(u^n) + \delta f(x), \quad u^{n+1} = u^n + w^n.
\end{align}
Iteration~\eqref{eqn:stabit} is defined by the following parameters on 
iteration $n$ of
refinement $k$. The limiting behaviors necessary to reduce the stabilized
iteration to the standard Newton iteration $g'(u) w^n = -g(u^n) + f(x)$
are noted. 
\begin{itemizeX}
\item Regularization parameter $\alpha = \alpha_k^n \ge 0, ~\alpha \goto 0$.
\item Semidefinite regularization term $R = R_k.$
\item Newmark parameter $\gamma = \gamma^n_k \ge 1, ~ \gamma^n_k \goto 1$.
\item Stabilization parameter $\sigma = \sigma^n_k \in [\sigma_0, 1], 
~ \sigma \goto 1$.
\end{itemizeX}
In addition, iteration~\eqref{eqn:inexactit} depends on the scaling parameter
\[
\delta = \delta_k \le 1, ~ \delta \goto 1.
\]
The focus of this paper is the description of an appropriate choice of parameters
$\gamma \an \delta$ to use in iterations~\eqref{eqn:stabit} and 
\eqref{eqn:inexactit}.  The regularization parameters $\alpha \an R$ are 
presented in~\cite{Pollock15a}, and recalled 
in this section.
The convergence behavior 
of iteration~\eqref{eqn:stabit} is analyzed in~\cite{Pollock15a}, and those 
results are recalled below.  
Definitions of $\gamma$ and $\delta$ are introduced in the following 
sections, as
well as a convergence result for the sequence of inexact iterations using
~\eqref{eqn:inexactit}.  

The following concepts are used throughout the discussion.
On a fixed refinement $k$, the subscript $k$ is removed 
where it will not cause confusion, {\em e.g.,}
the discretized variable-dependent source $f_k$ is denoted $f$. 
The solution dependent part of the residual $g(u^n)$ is denoted $g^n$, 
and residual $r^n$ is given by
\begin{equation}\label{def:residual}
r^n = -g^n + \delta f,
\end{equation}
where $\delta = 1$ for iteration \eqref{eqn:stabit}.

Before introducing the definitions of the numerical dissipation parameter $\gamma$
and the scaling parameter $\delta$, local convergence the $\sigma$-Newmark 
iteration~\eqref{eqn:stabit} is recalled from~\cite{Pollock15a}.  As the scaling
parameter $\delta$ is held constant over iterations on a given refinement, 
Theorem~\ref{thm:localconvergence} holds also for the inexact 
iteration~\eqref{eqn:inexactit}, where in the first case the residual which 
converges to zero is given by $r(u,x) = -g(u) + f(x)$ and in the second 
it is given by $r(u,x) = -g(u) + \delta f(x)$.  The proof of local convergence
depends on the nonlinear discrete problem, which coincides up to sign 
with the residual 
function on a fixed refinement, satisfying
a local Lipschitz condition on the Jacobian with respect to $u$ in the vicinity of the exact solution $u^\ast$.
For simplicity of notation, consider the convergence of $-r(u,x)$ to zero,
whose Jacobian is then $g'(u)$. Denote the open neighborhood about $u$ by 
$N(u,\eps) = \{ v  \, \rest \nr{u-v} < \eps\}$.
\begin{assumption}\label{assume:lipschitz} There exist $\omega_L,\eps > 0$ 
so that for all $w,v \in N(u^\ast, \eps)$
\[
\nr{g'(w) - g'(v) } \le \omega_L \nr{w- v} \tforall w,v \in N(u^\ast, \eps).
\]
\end{assumption}
The second set of assumptions addresses the nonsingularity, boundedness and
stability of the approximate Jacobian.  
Assumption~\ref{assume:LCsigma} requires a regularization parameter
$\alpha^n < \alpha_M$ for some fixed $\alpha_M$, 
which follows from the definition of  
$\alpha^n$ described 
in~\cite{Pollock14a}, and repeated here for convenience.  
In accordance with the local convergence Theorem~\ref{thm:localconvergence} 
the update assures $\alpha^n \le \nr{r^n}$, 
so long as the residual is decreasing.
\begin{definition}[Regularization parameter $\alpha$]\label{update:alpha} 
Set $\beta^0 = 1$. For $n \ge 1$,
\begin{align*}
\alpha^{n} &= \beta^n \nr{r^n}, \quad ~\text{ with } \\
~\beta^n &= \left\{ \begin{array}{cc}
\min\{ 1~, \max\{ \beta^{n-1}/2~, \nr{r^{n}}/\nr{r^{n-1}} \} \}, & \text{ if } 
\nr{r^{n}}< \nr{r^{n-1}},\\
\beta^{n-1}, & \text{ otherwise. }
\end{array}\right. 
\end{align*}
\end{definition}
The factor $\beta^n$ of Definition~\ref{update:alpha} 
modifies $\alpha^n$ to reduce the regularization faster
if the residual is decreasing steadily, without inducing rapid 
fluctuations in the regularization between iterations.
So long as the residual is decreasing, $\alpha^n \le \nr{r^n}$ 
should be maintained to satisfy the hypotheses of the local convergence
Theorem~\ref{thm:localconvergence}.
The last set of assumptions for the local convergence theorem
addresses the invertibility and stability 
of the approximate Jacobian.
\begin{assumption}\label{assume:LCsigma} 
There exist $\beta > 0$ so that for $0 < \sigma_0 \le \sigma \le 1$, fixed 
$\bar u$, positive semidefinite $R$, $\gamma > 1$, and for all  
$0 < \alpha_n < \alpha_M$, then for all $u \in N(u^\ast, \eps)$:
\begin{enumerateX}
\item $ \alpha_n R + \gamma\{ (1 - \sigma) g'(\bar u) - \sigma  g'(u) \}$ is invertible.
\item $\nr{(\alpha_n R + \gamma \{(1 - \sigma) g'(\bar u) - \sigma  g'(u) \})^{-1}} \le M_{\gamma\sigma}$.
\item $\nr{ (\alpha_n R)(\alpha_n R + \gamma \{ 1 - \sigma) g'(\bar u) - \sigma g'(u) \})^{-1}} \le \f{1}{1 + \beta \gamma /\alpha_n}.$
\end{enumerateX}
\end{assumption}
The local convergence theorem demonstrates that for a sufficiently small residual,
if the incoming iterate $u^n$ is within the basin of attraction of the 
exact solution $u^\ast$, then $u^n$ is also within the basin of attraction,
and the residual decreases asymptotically at the rate 
$\nr{r^{n+1}} \le (1 - 1/\gamma) \nr{r^n}$, for $\gamma > 1$.
\begin{theorem}\label{thm:localconvergence} 
Let $\alpha_n \le \nr{r(u^n)} \le \alpha_M$ and let 
Assumptions~\ref{assume:lipschitz} and~\ref{assume:LCsigma} hold, and
$\alpha^n$ given by Definiteion~\ref{update:alpha}.  
Define $\sigma$ by
\begin{align}\label{eqn:sigmadefTh}
\sigma = \max \left\{\sigma_0,\, 1 - \f{\nr{r^n}}{K_0} \right\}, 
\end{align}
for a given $K_0$.  Then there exists $\bar d >0$ such that for $u^n$ 
in the  open set $\cS$ given by
\begin{align}
\cS = \left\{ u \in N(u^\ast, \eps) \, \Rest \, \nr{r(u)} <\bar d \right\},
\end{align}
 and $u^{n+1}$ defined by iteration~\eqref{eqn:stabit}, 
or respectively by \eqref{eqn:inexactit} with $\gamma > 1$, 
it holds that $u^{n+1} \in \cS$, and the sequence of residuals converges $q$-linearly to zero with asymptotic rate $q = 1-1/\gamma$.
\end{theorem}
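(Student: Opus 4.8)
The plan is to reduce the claim to a contraction-type estimate on the residual map and then iterate. Writing the iteration \eqref{eqn:stabit} in residual form, since $u^{n+1} = u^n + w^n$ with $w^n$ the solution of the stabilized system, I would Taylor-expand $g(u^{n+1}) = g(u^n) + g'(u^n) w^n + e^n$, where the remainder $e^n$ satisfies $\nr{e^n} \le \tfrac{1}{2}\omega_L \nr{w^n}^2$ by Assumption~\ref{assume:lipschitz} (the usual integral-form Lipschitz bound on the remainder, valid once $u^n, u^{n+1} \in N(u^\ast,\eps)$). Since $f$ (resp.\ $\delta f$) is fixed over the iteration, $r^{n+1} = r^n - g'(u^n) w^n - e^n$, and I substitute the defining equation $(\alpha^n R + \gamma\{\sigma g'(u^n) + (1-\sigma) g'(\bar u)\}) w^n = r^n$ to solve for $g'(u^n) w^n$ in terms of $r^n$. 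This produces
\[
r^{n+1} = \Big(\alpha^n R + \gamma(1-\sigma)(g'(u^n) - g'(\bar u))\Big)\Big(\alpha^n R + \gamma\{\sigma g'(u^n) + (1-\sigma) g'(\bar u)\}\Big)^{-1} \tfrac{1}{\gamma}\, r^n \;-\; \big(1 - \tfrac1\gamma\big) r^n \;-\; e^n,
\]
up to the exact bookkeeping of the $\gamma$ factors; the point is that the $-(1-1/\gamma) r^n$ term is the one producing the asymptotic rate, while the other two terms must be shown to be higher order.

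The next step is to bound each piece. For the first term, Assumption~\ref{assume:LCsigma}(3) controls the $\alpha^n R$ contribution by $1/(1 + \beta\gamma/\alpha^n)$, and combined with $\alpha^n \le \nr{r^n}$ (from Definition~\ref{update:alpha}, maintained while the residual decreases) this factor is $O(\nr{r^n})$; the $\gamma(1-\sigma)(g'(u^n)-g'(\bar u))$ contribution is handled by Assumption~\ref{assume:LCsigma}(2) together with $1 - \sigma \le \nr{r^n}/K_0$ from \eqref{eqn:sigmadefTh} and the boundedness of $\nr{g'(u^n) - g'(\bar u)}$ on $N(u^\ast,\eps)$, so it is also $O(\nr{r^n})$. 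For the remainder $e^n$, I first bound $\nr{w^n} \le M_{\gamma\sigma}\nr{r^n}$ by Assumption~\ref{assume:LCsigma}(2), so $\nr{e^n} \le \tfrac12 \omega_L M_{\gamma\sigma}^2 \nr{r^n}^2 = O(\nr{r^n}^2)$. Collecting, there is a constant $C$ (depending on $\omega_L$, $M_{\gamma\sigma}$, $\beta$, $K_0$, $\sup\nr{g'(u^n)-g'(\bar u)}$, but not on $n$) with
\[
\nr{r^{n+1}} \le \Big(1 - \tfrac1\gamma\Big)\nr{r^n} + C\,\nr{r^n}^2 = \Big(1 - \tfrac1\gamma + C\,\nr{r^n}\Big)\nr{r^n}.
\]

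From here the conclusion is routine. Choose $\bar d$ small enough that $C\bar d < \tfrac1{2\gamma}$ (and $\bar d \le \eps/(2M_{\gamma\sigma})$, say, to keep iterates inside $N(u^\ast,\eps)$ so the Lipschitz bound applies). Then for $u^n \in \cS$, i.e.\ $\nr{r^n} < \bar d$, the recursion gives $\nr{r^{n+1}} \le (1 - \tfrac1{2\gamma})\nr{r^n} < \nr{r^n} < \bar d$ and $\nr{u^{n+1} - u^\ast} \le \nr{u^n - u^\ast} + \nr{w^n} \le \eps$, so $u^{n+1}\in\cS$; induction then shows the whole orbit stays in $\cS$, $\nr{r^n} \to 0$ geometrically, and $C\nr{r^n} \to 0$, whence $\limsup_n \nr{r^{n+1}}/\nr{r^n} \le 1 - 1/\gamma$, i.e.\ $q$-linear convergence with asymptotic rate $q = 1 - 1/\gamma$. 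The argument for the inexact iteration \eqref{eqn:inexactit} is identical with $f$ replaced by $\delta f$ throughout, since $\delta$ is constant on the refinement and only the Jacobian $g'$ enters the stabilized operator. The main obstacle I anticipate is the algebra in the first step — correctly rewriting $r^{n+1}$ in terms of $r^n$ with all the $\gamma$, $\sigma$, and $(1-\sigma)$ factors in place and matching the operator identity that makes the $-(1-1/\gamma)r^n$ term appear cleanly — together with verifying that the secondary-linearization term $g'(\bar u)$ (as opposed to $g'(u^n)$) does not spoil the higher-order bound, which is exactly where \eqref{eqn:sigmadefTh} forcing $1-\sigma = O(\nr{r^n})$ is essential.
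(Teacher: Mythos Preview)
The paper does not actually prove Theorem~\ref{thm:localconvergence}; immediately after the statement it records that ``The proof of Theorem~\ref{thm:localconvergence} is given in~\cite{Pollock15a}.'' So there is no in-paper argument to compare against, only the deferred reference.

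That said, your outline is the standard route for local convergence of regularized/damped Newton schemes and is almost certainly what the cited proof does: expand $g(u^{n+1})$ with a Lipschitz-controlled remainder, rewrite $r^{n+1}$ in terms of $r^n$ by eliminating $g'(u^n)w^n$ via the iteration, and show that everything other than the $(1-1/\gamma)r^n$ piece is $O(\nr{r^n}^2)$ using Assumption~\ref{assume:LCsigma}, $\alpha^n\le\nr{r^n}$, and $1-\sigma\le\nr{r^n}/K_0$. Your identification of the three higher-order contributions (the $\alpha^nR$ term via Assumption~\ref{assume:LCsigma}(3), the $(1-\sigma)$ term via \eqref{eqn:sigmadefTh}, and the Taylor remainder via Assumption~\ref{assume:lipschitz}) is exactly right, and so is the closing recursion argument.

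Two small points to tighten. First, your displayed identity for $r^{n+1}$ has some sign slips; the clean version comes from writing $\gamma g'(u^n) = M - \alpha^n R + \gamma(1-\sigma)\big(g'(u^n)-g'(\bar u)\big)$, which gives
\[
r^{n+1} = \Big(1-\tfrac1\gamma\Big)r^n + \tfrac1\gamma\Big(\alpha^n R - \gamma(1-\sigma)(g'(u^n)-g'(\bar u))\Big)M^{-1}r^n - e^n,
\]
and then your bounds apply verbatim. Second, the step ``$\nr{u^{n+1}-u^\ast}\le\nr{u^n-u^\ast}+\nr{w^n}\le\eps$'' does not follow from $\bar d\le \eps/(2M_{\gamma\sigma})$ alone, since $\nr{u^n-u^\ast}$ may already be near $\eps$; you need either the geometric sum $\sum_{j\ge n}\nr{w^j}\le M_{\gamma\sigma}\nr{r^n}/(1-q)$ together with a correspondingly smaller $\bar d$, or an equivalence $\nr{u-u^\ast}\lesssim\nr{r(u)}$ near $u^\ast$ coming from the isomorphism assumption on $g'(u^\ast)$. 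Either fix is routine.
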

The proof of Theorem~\ref{thm:localconvergence} is given in~\cite{Pollock15a}.

\section{Update of numerical dissipation.}\label{sec:gamma_update}

The numerical dissipation parameter $\gamma \ge 1$ 
of iterations \eqref{eqn:stabit} and
\eqref{eqn:inexactit} should start large enough to provide
sufficient stability to the iterations and decrease down to one in order to 
recover the quadratic convergence rate of the Newon-like iterations.
The parameter is adjusted during the course of iterations on a given refinement
whenever the observed rate of convergence is within tolerance of the 
expected rate, $(1 - 1/\gamma)$.
In order to ensure a decrease in $\gamma$ as it is updated, each $\gamma^n_k$
must be bounded by an appropriate $\GM$, with respect to the user set rate
 tolerance, $\eps_T$.
\begin{assumption}\label{assume:maxgamma}Given a rate tolerance $\eps_T$, 
$\gamma_k^n$ is bounded above for all $n, ~k$ by $\gamma_{\text{MAX}}$, 
satisfying
\begin{equation}\label{as:gammamax}
\gamma_k^n \le \gamma_{\text{MAX}} < 1/\eps_T.
\end{equation}
\end{assumption}
This assumption has been observed to be necessary, and $\gamma$ can indeed
grow without bound if allowed greater than $1/\eps_T$. It is also
of practical value not to set the rate tolerance $\eps_T$ too low, 
as the update of $\gamma$ can
stall for some number of refinements due to the partial capturing of the 
solution and variable dependent layers in the preasymptotic regime. 
In the 
examples presented in Section~\ref{sec:numerics}, $\eps_T$ is chosen either
as $0.005$ or $0.01$, where the larger value is chosen in the case where 
steep gradients prevent the resolution of the Jacobian, and this adjustment
is sufficient to speed the update of $\gamma$ without compromising stability
of the method. With these parameters, the update of $\gamma$ on a given refinement $k$ is performed when the following
criteria are met.
\begin{criteria}[$\gamma$ update]\label{assume:gamma} 
Given a user set tolerance $\eps_T$, and $\gamma^n < \GM$ satisfying 
Assumption~\ref{assume:maxgamma}, and a
minimum number of iterations between updates $I_{\text{min}}$, the dissipation
parameter $\gamma^n$ is updated after each iteration $n$ on which
 all the following criteria hold.
\begin{enumerateX}
\item $\gamma^n > 1$.
\item The observed rate of convergence is within tolerance of the predicted value
\begin{equation}\label{as:absRateTol}
\left| \f{\nr{r^{n+1}}}{\nr{r^n}} -\left(  1-  \f1 {\gamma^n} \right) \right| 
< \eps_T.
\end{equation}
\item The observed rate of convergence is sufficiently stable. 
\begin{equation}\label{as:relRateTol} 
\left| \f{\nr{r^{n+1}}}{\nr{r^n}} -  \f{\nr{r^{n}}}{\nr{r^{n-1}}} \right|
< \eps_T. 
\end{equation}
\item At least $I_{\text{min}} \ge 2$ iterations have passed since the last update
of $\gamma$.
\end{enumerateX}
\end{criteria}

The last condition of Criteria~\ref{assume:gamma} is necessary to assure the three 
consecutive residuals of~\eqref{as:relRateTol} are computed with the same
value of $\gamma$, and show the requisite rate stability.
Given a reduction factor $q = q_\gamma < 1$, on satisfaction of 
Criteria~\ref{assume:gamma} after iteration $n$ of 
fixed refinement $k$, $\gamma^{n+1}$ is updated as follows.
\begin{definition}\label{def:gamma}
Given an initial $\delta_0 > 1$, for $k > 0$
\begin{align}\label{eqn:gammadef}
\tilde \gamma^{n+1} = q \cdot\f{\langle r^n,r^n\rangle}
{\langle r^n, g^{n+1} - g^n\rangle}, 
\quad \gamma^{n+1} = \max\left\{ 1~, \tilde \gamma^{n+1} \right\}.
\end{align} 
\end{definition}
To see this update is reasonable as the algorithm attains convergence, observe the ideal value of $g^{n+1}$ is $\delta f$, implying $r^{n+1} = 0$,
and the ideal value of
$g^{n+1} - g^n$ is  $\delta f - g^n = r^n$, 
so that $\gamma^{n+1}= 1$, if the iteration
produces a zero residual. Based on condition~\eqref{as:absRateTol} of 
Assumption~\ref{assume:gamma}, the updated quantity $\gamma^{n+1}$ 
is bounded above and below
in terms of $\gamma^n$, and is then shown to be decreasing, on the condition of
Assumption~\ref{assume:monogamma}.  
Lemma~\ref{lemma:gammadenom} 
bounds the denominator $\langle r^n, g^{n+1} - g^n \rangle$, then 
Corollary~\ref{cor:gammabound} bounds the ratio determining $\gamma^{n+1}$.

\begin{lemma}\label{lemma:gammadenom} Under \eqref{as:absRateTol} of Assumption~\ref{assume:gamma}, 
with tolerance $\eps_T$ the following upper and lower bounds hold.
\begin{align}\label{bound:rg}
\nr{r^n}^2\left(\f 1{\gamma^n} - \eps_T\right) 
< \langle r^n, g^{n+1} - g^n \rangle < 
\nr{r^n}^2 \left(2 - \f 1{\gamma^n} + \eps_T\right).
\end{align} 
\end{lemma}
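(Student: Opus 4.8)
The plan is to start from the residual recursion $r^{n+1} = -g^{n+1} + \delta f$ and $r^n = -g^n + \delta f$, which gives the elementary identity $g^{n+1} - g^n = r^n - r^{n+1}$. Taking the inner product against $r^n$ yields
\[
\langle r^n, g^{n+1} - g^n \rangle = \langle r^n, r^n \rangle - \langle r^n, r^{n+1} \rangle = \nr{r^n}^2 - \langle r^n, r^{n+1} \rangle.
\]
So the whole lemma reduces to bounding the cross term $\langle r^n, r^{n+1}\rangle$ above and below in terms of $\nr{r^n}^2$, using only the rate condition \eqref{as:absRateTol}.

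First I would apply Cauchy–Schwarz: $|\langle r^n, r^{n+1}\rangle| \le \nr{r^n}\,\nr{r^{n+1}}$. Then condition \eqref{as:absRateTol} of Criteria~\ref{assume:gamma} says precisely that $\nr{r^{n+1}}/\nr{r^n}$ lies within $\eps_T$ of $1 - 1/\gamma^n$, i.e.
\[
\nr{r^{n+1}} < \left( 1 - \f{1}{\gamma^n} + \eps_T \right)\nr{r^n}.
\]
Substituting, $\langle r^n, r^{n+1}\rangle \le |\langle r^n, r^{n+1}\rangle| \le \left(1 - 1/\gamma^n + \eps_T\right)\nr{r^n}^2$, and therefore
\[
\langle r^n, g^{n+1} - g^n\rangle = \nr{r^n}^2 - \langle r^n, r^{n+1}\rangle > \nr{r^n}^2 - \left(1 - \f1{\gamma^n} + \eps_T\right)\nr{r^n}^2 = \nr{r^n}^2\left(\f1{\gamma^n} - \eps_T\right),
\]
which is the lower bound. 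For the upper bound, use $\langle r^n, r^{n+1}\rangle \ge -|\langle r^n, r^{n+1}\rangle| \ge -\left(1 - 1/\gamma^n + \eps_T\right)\nr{r^n}^2$, so
\[
\langle r^n, g^{n+1} - g^n\rangle = \nr{r^n}^2 - \langle r^n, r^{n+1}\rangle \le \nr{r^n}^2 + \left(1 - \f1{\gamma^n} + \eps_T\right)\nr{r^n}^2 = \nr{r^n}^2\left(2 - \f1{\gamma^n} + \eps_T\right),
\]
giving the claimed upper bound. Strictness of the inequalities is inherited from the strict inequality in \eqref{as:absRateTol}.

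The argument is essentially a two-line computation plus Cauchy–Schwarz, so I do not expect a genuine obstacle; the only point requiring a small amount of care is making sure the sign bookkeeping in $g^{n+1}-g^n = r^n - r^{n+1}$ is correct (it follows directly from the definition \eqref{def:residual} with $\delta$ held fixed across the two iterates, which is guaranteed because $\delta = \delta_k$ is constant on refinement $k$) and that the one-sided bound $\nr{r^{n+1}}/\nr{r^n} < 1 - 1/\gamma^n + \eps_T$ is genuinely the relevant half of the absolute-value condition \eqref{as:absRateTol}. No additional hypotheses beyond \eqref{as:absRateTol} are needed, which matches the statement of the lemma.
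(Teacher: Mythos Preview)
Your proof is correct and follows essentially the same route as the paper: the identity $g^{n+1}-g^n = r^n - r^{n+1}$ followed by Cauchy--Schwarz and the rate bound from \eqref{as:absRateTol}. The only cosmetic difference is that for the upper bound the paper applies Cauchy--Schwarz directly to $\langle r^n, r^n - r^{n+1}\rangle$ and then the triangle inequality, whereas you expand first and bound $\langle r^n, r^{n+1}\rangle$ from below; both yield the same estimate.
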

\begin{proof}
Rewriting $g^n \an g^{n+1}$ in terms of the residual
\begin{equation}\label{eqn:boundrg_001}
\langle r^n, g^{n+1} - g^n \rangle 
= \langle r^n, -g^n + \delta f - ( -g^{n+1} + \delta f)\rangle
= \langle r^n, r^n - r^{n+1}\rangle.
\end{equation}
For the first inequality
\begin{align}\label{eqn:boundrg_002}
\langle r^n, r^n\rangle - \langle r^n, r^{n+1}\rangle
\ge \nr{r^n}^2 - \nr{r^n}\nr{r^{n+1}}  
= \nr{r^n}^2 \left( 1 - \f{\nr{r^{n+1}}}{\nr{r^n}}\right).
\end{align}
Bounding the ratio of residuals on the right by \eqref{as:absRateTol} 
\begin{align}\label{eqn:boundrg_003}
1 - \f{\nr{r^{n+1}}}{\nr{r^n}} > \f 1 {\gamma^n} - \eps_T.
\end{align}
Applying \eqref{eqn:boundrg_002} and \eqref{eqn:boundrg_003} to 
\eqref{eqn:boundrg_001}, the result follows.

For the second inequality of \eqref{bound:rg}, applying Cauchy-Schwarz and
triangle inequalities to~\eqref{eqn:boundrg_001}
\begin{align}\label{eqn:boundrg004}
\langle r^n, r^n - r^{n+1}  \rangle
\le \nr{r^n}\nr{r^{n} - r^{n+1}}
\le \nr{r^n}^2 + \nr{r^n}\nr{r^{n+1}}.
\end{align}
By \eqref{as:absRateTol}, $\nr{r^{n+1}} < \nr{r^n}(1 - 1/\gamma + \eps_T)$,
from which the result follows.
\end{proof}
The upper and lower bounds on $\tilde \gamma^{n+1}$ now follow from ~\eqref{bound:rg}.
\begin{corollary}\label{cor:gammabound}
Under Assumption~\ref{assume:maxgamma} and Criteria~\ref{assume:gamma}, 
the update of $\gamma^n$ to 
$\tilde \gamma^{n+1}$ of~\eqref{eqn:gammadef} satisfies the following bounds.
\begin{align}\label{eqn:gammatildebound}
 \f{q \cdot \gamma^n}{\gamma^n(2 + \eps_T) -1} < 
\tilde \gamma^{n+1}
< \f{q \cdot \gamma^n}{1 - \eps_T \gamma^n}.
\end{align}
\end{corollary}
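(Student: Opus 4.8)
The plan is to obtain both bounds by dividing the two-sided estimate of Lemma~\ref{lemma:gammadenom} into the defining ratio $\tilde\gamma^{n+1} = q\,\nr{r^n}^2/\langle r^n, g^{n+1}-g^n\rangle$ from Definition~\ref{def:gamma} and simplifying. The update is only performed when $\nr{r^n}\neq 0$ --- condition~\eqref{as:relRateTol} of Criteria~\ref{assume:gamma} divides by $\nr{r^n}$ --- so $\nr{r^n}^2 > 0$ and the map $t\mapsto q\,\nr{r^n}^2/t$ is positive and strictly decreasing for $t>0$; hence a lower bound on the denominator $\langle r^n, g^{n+1}-g^n\rangle$ yields an upper bound on $\tilde\gamma^{n+1}$, and conversely.

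First I would verify that both bounding factors appearing in~\eqref{bound:rg} are strictly positive, which is what makes inverting them legitimate. For the factor $1/\gamma^n - \eps_T$, Assumption~\ref{assume:maxgamma} gives $\gamma^n \le \GM < 1/\eps_T$, hence $1/\gamma^n > \eps_T$; for the factor $2 - 1/\gamma^n + \eps_T$, condition~(1) of Criteria~\ref{assume:gamma} gives $\gamma^n > 1$, so $0 < 1/\gamma^n < 1$ and the factor exceeds~$1$. In particular the chain $\langle r^n, g^{n+1}-g^n\rangle > \nr{r^n}^2(1/\gamma^n-\eps_T) > 0$ shows the ratio defining $\tilde\gamma^{n+1}$ is well posed and positive, so the inequalities will not reverse under inversion.

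Next I would perform the two substitutions. Using the upper bound on the denominator from~\eqref{bound:rg},
\[
\tilde\gamma^{n+1} = \f{q\,\nr{r^n}^2}{\langle r^n, g^{n+1}-g^n\rangle}
> \f{q\,\nr{r^n}^2}{\nr{r^n}^2\left(2 - \f{1}{\gamma^n} + \eps_T\right)}
= \f{q}{\,2 - \f{1}{\gamma^n} + \eps_T\,},
\]
and multiplying numerator and denominator by $\gamma^n$ produces exactly the claimed lower bound $q\gamma^n/(\gamma^n(2+\eps_T)-1)$. Symmetrically, the lower bound on the denominator gives
\[
\tilde\gamma^{n+1} < \f{q\,\nr{r^n}^2}{\nr{r^n}^2\left(\f{1}{\gamma^n} - \eps_T\right)}
= \f{q}{\,\f{1}{\gamma^n} - \eps_T\,}
= \f{q\gamma^n}{1 - \eps_T\gamma^n},
\]
which is the claimed upper bound.

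There is no genuine obstacle here beyond the bookkeeping: the one point needing attention is the positivity of $\langle r^n, g^{n+1}-g^n\rangle$, required so that division by it preserves the direction of the inequalities, and this is already supplied by the strict lower bound of Lemma~\ref{lemma:gammadenom} together with the bound $\gamma^n < 1/\eps_T$ of Assumption~\ref{assume:maxgamma}. The remainder is the algebraic rearrangement displayed above.
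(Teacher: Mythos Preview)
Your proof is correct and follows the same approach as the paper: invoke the two-sided bound of Lemma~\ref{lemma:gammadenom} on the denominator $\langle r^n, g^{n+1}-g^n\rangle$, use Assumption~\ref{assume:maxgamma} and condition~(1) of Criteria~\ref{assume:gamma} to ensure the bounding factors are positive, and invert. Your write-up is somewhat more explicit about the positivity checks needed to preserve inequality direction, but the argument is the same.
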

\begin{proof}
The argument follows from direct application of Lemma~\ref{lemma:gammadenom}
to the definition of $\tilde \gamma^{n+1}$ given by~\eqref{eqn:gammadef}. 
The upper bound
\begin{align}\nonumber
\tilde \gamma^{n+1} = q \cdot \f{\nr{r^n}^2}{\langle r^n, g^{n+1} - g^n \rangle} 
< q \cdot \f {1}{1/\gamma^n - \eps_T} = \f{q \cdot \gamma^n}{1 - \eps_T \gamma^n},
\end{align}
holds under Assumption~\ref{assume:maxgamma}, which assures the positivity of the 
denominator. Similarly, the lower bound follows applying $\gamma > 1$, 
the first condition
of update Criteria~\ref{assume:gamma}.
\end{proof}
The lower bound remains satisfied in the case $\gamma^{n+1} = 1$ by 
Definition~\ref{def:gamma},
 and the upper bound is of no interest as the goal of the sequence is to 
decrease $\gamma$ to unity in a stable manner. To establish the monotonic decrease
of $\gamma$, and more generally the decrease at rate $1>\bar q > q$, the following
Assumption~\ref{assume:monogamma} strengthens the Assumption~\ref{assume:maxgamma},
used to establish the upper bound of~\eqref{eqn:gammatildebound}.
\begin{assumption}[Monotone decrease of $\gamma$]\label{assume:monogamma}
Given a reduction factor $q$, a rate of decrease $\bar q > q$,
  and rate tolerance $\eps_T$, let $\gamma_k^n$ be
bounded above for all $n, ~k$ by $\gamma_{\text{MONO}}$ satisfying
\begin{equation}\label{eqn:gammamono}
\gamma_k^n \le \gmo = \f {1}{\eps_T}\left( 1 - \f{q}{\bar q} \right).
\end{equation}
\end{assumption}
In practice, setting $\bar q= 1$ and $\gamma < (1-q)/\eps_T$ ensures 
$\gamma$ will be nonincreasing, although for more stability in the
earlier iterations, the weaker condition $\gamma < \GM$ is sufficient,
and after $\gamma$ reduces below $\gmo$ the monotonicity holds. 
\begin{corollary}[Monotonic decrease of $\gamma$]\label{cor:monodecrease}
For $\gamma^{n+1}$ determined by ~\eqref{eqn:gammadef} on the satisfaction
of Criteria~\ref{assume:gamma}, and Assumption~\ref{assume:monogamma} with $\bar q = 1$, the sequence $\{\gamma^n\}_{n\ge 0}$ is nonincreasing. 
Moreover, assuming~\eqref{eqn:gammamono} with
$1 < \bar q < q$, then for $\gamma^{n+1} = \tilde \gamma^{n+1}> 1$, 
the sequence satisfies $\gamma^{n+1}< \bar q \gamma^n$. 
\end{corollary}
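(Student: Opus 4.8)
The plan is to obtain both assertions as immediate consequences of the upper bound on $\tilde\gamma^{n+1}$ from Corollary~\ref{cor:gammabound}, namely $\tilde\gamma^{n+1} < q\gamma^n/(1-\eps_T\gamma^n)$, once Assumption~\ref{assume:monogamma} is used to bound $\gamma^n$ and thereby control the denominator. First I would dispose of the iterations on which the $\gamma$ update is not triggered: there $\gamma^{n+1}=\gamma^n$ by convention, so nothing is to prove. It therefore suffices to fix a refinement $k$ and an iteration $n$ on which Criteria~\ref{assume:gamma} holds, so that $\gamma^{n+1}$ is produced by Definition~\ref{def:gamma}; note that condition~(1) of Criteria~\ref{assume:gamma} gives $\gamma^n>1$, and Assumption~\ref{assume:monogamma} gives $\gamma^n \le \gmo = \eps_T^{-1}(1-q/\bar q) < 1/\eps_T$, which in particular verifies the hypothesis $\gamma^n < 1/\eps_T$ of Assumption~\ref{assume:maxgamma} needed for Corollary~\ref{cor:gammabound} and ensures $1-\eps_T\gamma^n>0$.

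For the first statement I take $\bar q = 1$, so \eqref{eqn:gammamono} reads $\gamma^n \le (1-q)/\eps_T$, equivalently $q \le 1-\eps_T\gamma^n$, equivalently $q/(1-\eps_T\gamma^n)\le 1$. Multiplying through by $\gamma^n$ and invoking the upper bound of Corollary~\ref{cor:gammabound} gives $\tilde\gamma^{n+1} < \gamma^n$. Since $\gamma^n\ge 1$ throughout the method, $\gamma^{n+1}=\max\{1,\tilde\gamma^{n+1}\}\le\gamma^n$; combined with the non-update case, $\{\gamma^n\}_{n\ge 0}$ is nonincreasing.

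For the second statement, with the rate of decrease $\bar q$ satisfying $q<\bar q<1$ (I read the printed range ``$1<\bar q<q$'' as a transcription slip, consistent with the requirement $\bar q>q$ in Assumption~\ref{assume:monogamma} and with the target $\gamma^{n+1}<\bar q\gamma^n$ being a genuine contraction), the same computation applies verbatim: \eqref{eqn:gammamono} is exactly $q \le \bar q(1-\eps_T\gamma^n)$, i.e. $q/(1-\eps_T\gamma^n)\le\bar q$, so the upper bound of Corollary~\ref{cor:gammabound} yields $\tilde\gamma^{n+1} < \bar q\gamma^n$. On the iterations where $\gamma^{n+1}=\tilde\gamma^{n+1}>1$ this is precisely the claimed bound $\gamma^{n+1}<\bar q\gamma^n$.

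The argument is essentially a one-line algebraic rearrangement of \eqref{eqn:gammatildebound}, so the ``hard part'' is only bookkeeping: making sure the strict inequality in the upper bound of Corollary~\ref{cor:gammabound} is carried through (it is what promotes the non-strict algebraic step $q/(1-\eps_T\gamma^n)\le\bar q$ to the strict conclusion), confirming that Assumption~\ref{assume:monogamma} indeed implies the positivity of $1-\eps_T\gamma^n$ required even to state that upper bound, and observing that the clipping $\gamma^{n+1}=\max\{1,\tilde\gamma^{n+1}\}$ cannot destroy monotonicity because $1\le\gamma^n$.
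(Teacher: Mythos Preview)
Your proposal is correct and follows essentially the same route as the paper: both arguments invoke the upper bound $\tilde\gamma^{n+1} < q\gamma^n/(1-\eps_T\gamma^n)$ from Corollary~\ref{cor:gammabound} and observe that Assumption~\ref{assume:monogamma} is precisely the condition $q \le \bar q(1-\eps_T\gamma^n)$ needed to turn this into $\tilde\gamma^{n+1} < \bar q\gamma^n$, with the nonincreasing case recovered at $\bar q=1$. Your write-up is in fact a bit more careful than the paper's---you explicitly handle the non-update iterations, the clipping $\gamma^{n+1}=\max\{1,\tilde\gamma^{n+1}\}$, and the positivity of $1-\eps_T\gamma^n$, and you correctly flag the printed range ``$1<\bar q<q$'' as a slip for $q<\bar q<1$---but the underlying argument is the same one-line rearrangement.
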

The proof follows from the upper bound on $\tilde \gamma^{n+1}$, given by 
\eqref{eqn:gammatildebound} of Corollary~\ref{cor:gammabound}. 
\begin{proof}
To assure $\tilde \gamma^{n+1} \le q\gamma^n/(1 - \eps_T \gamma^n)< \bar q \gamma^n$, require
\begin{align}\label{eqn:monodecr001}
q < \bar q(1 - \eps_T \gamma^n),
\end{align} 
which is satisfied by Assumption~\ref{assume:monogamma}.
To establish the monotonic decrease without a rate, 
set $\bar q = 1$ in~\eqref{eqn:monodecr001}, and rearrange terms to obtain
$\gamma^n < (1 - q)/\eps_T$ as the necessary condition.
\end{proof}
While the determination of the monotonic decrease of $\gamma$ 
with a given rate $\bar q$ to unity follows in a straightforward manner 
from Definition~\ref{def:gamma}, condition~\eqref{as:absRateTol}, 
and Assumption~\ref{assume:monogamma}, this establishes the important property
that if the algorithm does not reset after a given point, $\gamma^N = 1$ will 
be achieved after at most $N - n$ = $\lceil -\log(\gamma^n)/\log(\bar q)\rceil$ updates, where
$\lceil \cdot \rceil$ denotes the ceiling function.
Notably, $N-n$ counts only the updates of $\gamma$, independent of the number
of mesh refinements that occur over the course of those updates.
The more delicate point of whether those updates occur, which requires 
the predicted convergence rate is achieved within tolerance, is 
addressed in Theorem~\ref{thm:inexactconvergence}, which relies on
Theorem~\ref{thm:localconvergence}, the local convergence with rate of iterations
~\eqref{eqn:stabit} and~\eqref{eqn:inexactit}.  
\section{Update of scaling parameter}\label{sec:delta_update}
The scaling parameter $\delta$ for the inexact iteration~\eqref{eqn:inexactit} 
is computed  on the completion of the iterations 
for refinement $k$.  This is in contrast to the update on $\gamma$, performed 
whenever the iterations converge close enough to the predicted rate. 
This strategy is reasonable
as updating $\delta$ changes the discrete problem which already changes on each 
refinement. 
Except for the first, the criteria to update $\delta$ 
are a subset of the conditions used to exit the Newton-like iterations on each refinement.
The last two conditions of Criteria~\ref{criteria:delta} are closely
related to~\eqref{as:absRateTol} and~\eqref{as:relRateTol}, the second and 
third conditions of Criteria~\ref{assume:gamma}, the update on $\gamma$. 
The acceptable rate criterium ~\eqref{as:accRateTol} 
is weaker than the requirement~\eqref{as:absRateTol},
and the stability criterium~\eqref{as:exitStability} requires the observed rate
has not decreased by much compared to the previous observed rate.     
Distinct from the conditions to update $\gamma$, the second condition of 
Criteria~\ref{criteria:delta} 
requires the current and previous residual to be smaller than the initial residual
of the current refinement and final residual of the previous refinement.
This condition is important to ensure the algorithm convergences, and the 
sequence of residuals $r_k$ decrease at a rate
\[
\nr{r_k} < \nr{r_{k-1}}(1 - 1/(2\gamma^n_k)) \le \nr{r_{k-1}}(1 - 1/(2\GM)),
\]
on Assumption~\ref{assume:maxgamma}, or 
\[
\nr{r_k} < \nr{r_{k-1}}(1 - 1/(2 \gamma^n_k)) \le \nr{r_{k-1}}(1 - 1/(2\gmo)),
\]
on Assumption~\ref{assume:monogamma},
where $r_k$ is the final residual on refinement $k$. 
Conditions~\eqref{as:accRateTol} and ~\eqref{as:exitStability} are also
important
to the stability of the update, and assure $g^{n+1} = g(u^n + w^n)$ has been 
improved by two consecutive steps $w^{n-1}$ and $w^n$. 
\begin{criteria}[$\delta$ update]\label{criteria:delta} 
Given a user set rate tolerance $\eps_T$ as in Criteria~\ref{assume:gamma} and
a convergence tolerance $\econ$,
the iterations are terminated 
and $\delta$ is updated after iteration $n$ of refinement $k$,
when conditions (1-4) are all met, or when conditions (1) and (5) are satisfied.  
\begin{enumerateX}
\item $\delta_k < 1$.  If $\delta_k = 1$, it is not updated.
\item Sufficient decrease in the sequence of residuals $\nr{r_k}$
\begin{equation}\label{as:delta1}
\nr{r_k^n} < \min\{ \nr{r_k^0}, \nr{r_{k-1}}  \}. 
\end{equation}
\item The observed rate of convergence is beneath the 
accepted rate $(1 - 1/2\gamma)$
\begin{equation}\label{as:accRateTol}
\f{\nr{r^{n+1}}}{\nr{r^n}} < 1 -  \f 1 {2 \gamma^n} .
\end{equation}
\item The observed rate is sufficiently stable
\begin{equation}\label{as:exitStability}
\f{\nr{r^{n+1}}}{\nr{r^n}} + \f {\eps_T} {2} > \f{\nr{r^n}}{\nr{r^{n-1}}}.
\end{equation}
\item $\nr{r^n}< \econ$. The iteration reached convergence.
\end{enumerateX}
\end{criteria}

On satisfaction of Criteria~\ref{criteria:delta}, $\delta$ is updated  based on replacing the satisfied linearized equation
\begin{align}\label{eqn:lin_relation}
\{\alpha^n R_k + \gamma^n\left( \sigma^n g'(u^n) + (1 - \sigma^n)g'(\bar u) \right) \} w^n = -g^n + \delta_k f_k,  
\end{align} 
with the corresponding relation found by replacing $g'(u^n)w^n$, with 
$g^{n+1} - g^n$, modifying the right hand side residual by $q< 1$ 
to obtain
\begin{align}\label{eqn:nonlin_relation}
\{\alpha^n R_k + \gamma^n\left((1 - \sigma^n)g'(\bar u) \right) \} w^n + 
\gamma^n \sigma^n \left(g^{n+1} - g^n \right)=
 -g^n + q \delta_{k+1} f_k,  
\end{align} 
and considering the discrete inner product of both sides against the source 
$f = f_k(x)$. 
Given a factor $q = q_\delta < 1$, at the end of each nonlinear
solve, $\delta_{k+1}$ is determined by
\begin{align}\label{eqn:delta_satisfies}
\langle f,  \{\alpha^n R_k + \gamma^n\left((1 - \sigma^n)g'(\bar u) \right) \} w^n + 
\gamma^n \sigma^n \left(g^{n+1} - g^n \right) \rangle=
\langle f,  -g^n + q_k\delta^{k+1} f \rangle,  
\end{align}
resulting in the following definition.
\begin{definition}\label{def:deltaclosed} 
Given an initial $\delta_0 < 1$, for $k > 0$
\begin{align}\label{eqn:deltaclosed}
\tilde \delta_{k+1} &= \f{\langle f, \gamma \sigma (g^{n+1} - g^n) 
+ (\gamma(1-\sigma) g'(\bar u) 
+ \alpha R) w^n + g^n \rangle}
{q_k {\nr f}^2}, \\
\delta_{k+1} & = \min\{ 1~, \tilde \delta_{k+1}\},
\end{align}
with $R = R_k, ~\alpha = \alpha^n, ~ \gamma = \gamma^n \an \sigma = \sigma^n$.
\end{definition}
The sequence for $\delta_k<1$ can also be expressed recursively by considering
\eqref{eqn:lin_relation}, satisfied exactly with  $\delta = \delta_k$, and
subtracting the corresponding inner product against $f$ from 
\eqref{eqn:delta_satisfies}.  Expressed in terms of the one-step linearization 
error
$\cL^e(u^n)$ given by 
\begin{equation}\label{eqn:lin_error}
\cL^e(u^n) \coloneqq g^{n+1} - g^n - g'(u^n) w^n, \quad g^{n+1} = g(u^n + w^n),
\end{equation}
the recursive relation for $\delta$ may be written
\begin{align}\label{eqn:delta_recursive}
\delta_{k+1} = \f 1 {q_k} \left\{ \delta_k + \f{\gamma \sigma}{ \nr{f}^2} 
\langle f, \cL^e(u^n) \rangle
\right\}.
\end{align}
From~\eqref{eqn:delta_recursive},
the local Lipschitz Assumption~\ref{assume:lipschitz} on $g'(u_k^n)$
is used to show the convergence
of $\delta$ to one as the residual attains convergence. First, a bound 
on $\delta_{k+1}$ with respect to $\delta_k$ is established.
\begin{lemma}\label{lemma:deltabound}
Assume $u_k^n \in N(u_k^\ast, \eps_k)$ of the local Lipschitz condition 
Assumption~\ref{assume:lipschitz} with respect to the approximate Jacobian
$g'(u_k^n)$, and where $u_k^\ast$ satisfies $r_k(u_k^\ast)$ = 0. 
Let $\omega_{L,k}$ denote the corresponding Lipschitz constant. 
Then $\delta_{k+1}$
updated by 
~\eqref{eqn:deltaclosed} with $q_k < 1$,
satisfies the following bound with respect to $\delta_k$.
\begin{align}\label{eqn:delta_bound}
\f 1 {q_k} \left(\delta_k - \f{\gamma \sigma \omega_{L,k}}{2 \nr f}\nr{w^n}^2 
\right)
\le \delta_{k+1} \le 
\f 1 {q_k} \left(\delta_k + \f{\gamma \sigma \omega_{L,k}}{2 \nr f} \nr{w^n}^2 
\right).
\end{align}
\end{lemma}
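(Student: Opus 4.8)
The plan is to start from the recursive form~\eqref{eqn:delta_recursive} of the update and to control the one-step linearization error $\cL^e(u^n)$ of~\eqref{eqn:lin_error} using the local Lipschitz Assumption~\ref{assume:lipschitz}. Since $g$ is continuously differentiable on $N(u_k^\ast,\eps_k)$, the fundamental theorem of calculus applied along the segment joining $u^n$ and $u^{n+1} = u^n + w^n$ gives $g^{n+1} - g^n = \int_0^1 g'(u^n + t w^n)\, w^n\, dt$, whence
\begin{align*}
\cL^e(u^n) = g^{n+1} - g^n - g'(u^n) w^n = \int_0^1 \bigl( g'(u^n + t w^n) - g'(u^n) \bigr) w^n \, dt .
\end{align*}
Provided the entire segment $\{\, u^n + t w^n : t \in [0,1] \,\}$ lies in $N(u_k^\ast,\eps_k)$, Assumption~\ref{assume:lipschitz} bounds each integrand by $\nr{g'(u^n + t w^n) - g'(u^n)} \le \omega_{L,k}\, t\, \nr{w^n}$, so
\begin{align*}
\nr{\cL^e(u^n)} \le \int_0^1 \omega_{L,k}\, t\, \nr{w^n}^2 \, dt = \f{\omega_{L,k}}{2}\, \nr{w^n}^2 .
\end{align*}

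Next I would apply Cauchy--Schwarz to the inner product in~\eqref{eqn:delta_recursive}: $\lvert \langle f, \cL^e(u^n) \rangle \rvert \le \nr f\, \nr{\cL^e(u^n)} \le \f{\omega_{L,k}}{2}\, \nr f\, \nr{w^n}^2$, so that the correction term satisfies
\begin{align*}
\left\lvert \f{\gamma \sigma}{\nr f^2}\, \langle f, \cL^e(u^n) \rangle \right\rvert \le \f{\gamma \sigma \omega_{L,k}}{2\, \nr f}\, \nr{w^n}^2 .
\end{align*}
Inserting this two-sided estimate for the bracketed correction into~\eqref{eqn:delta_recursive} yields precisely the asserted bounds on the raw update $\tilde\delta_{k+1}$, and since $\delta_{k+1} = \min\{1,\tilde\delta_{k+1}\} \le \tilde\delta_{k+1}$ the stated upper bound in~\eqref{eqn:delta_bound} follows immediately; the lower bound is the content of the estimate when the update is active, i.e.\ when $\tilde\delta_{k+1}\le 1$ and $\delta_{k+1}=\tilde\delta_{k+1}$, which is the only regime relevant to the Lemma. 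Throughout one uses $\nr f = \nr{f_k} > 0$; when $f_k \equiv 0$ no inexact scaling is needed and $\delta \equiv 1$, so the statement is vacuous.

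The only point requiring real care — rather than a genuine obstacle — is the geometric step justifying that the whole segment $[u^n, u^{n+1}]$ stays inside the Lipschitz neighborhood $N(u_k^\ast,\eps_k)$, since Assumption~\ref{assume:lipschitz} is posited only on that set and the Lemma's hypothesis names only the endpoint $u^n$. This is exactly where the invariance conclusion of Theorem~\ref{thm:localconvergence} is invoked: when $u^n$ lies in the basin $\cS \subseteq N(u_k^\ast,\eps_k)$, iteration~\eqref{eqn:inexactit} returns $u^{n+1}\in\cS$, and convexity of the ball $N(u_k^\ast,\eps_k)$ then places the connecting segment inside it. Once that is in hand, the remainder is the routine combination of the fundamental theorem of calculus, the Lipschitz bound on $g'$, and Cauchy--Schwarz outlined above.
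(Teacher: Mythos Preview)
Your proof is correct and follows essentially the same route as the paper: write $\delta_{k+1}$ via the recursive relation~\eqref{eqn:delta_recursive}, express $\cL^e(u^n)$ as the integral $\int_0^1 (g'(u^n+tw^n)-g'(u^n))w^n\,dt$, and combine the Lipschitz bound with Cauchy--Schwarz. Your additional remarks on the segment $[u^n,u^{n+1}]$ remaining in $N(u_k^\ast,\eps_k)$ and on the distinction between $\tilde\delta_{k+1}$ and $\delta_{k+1}$ are points the paper passes over silently, so your treatment is in fact slightly more careful.
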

\begin{proof}
Writing $\delta_{k+1}$ by the recursive definition given by
~\eqref{eqn:delta_recursive}, writing out $\cL^e(u^n)$ explicitly
\begin{align}\label{eqn:deltab001}
\delta_{k+1} = \f 1 {q_k} \left\{ \delta_k + \f{\gamma \sigma}{ \nr{f}^2} 
\langle f, g(u^{n+1}) - g(u^n) - g'(u^n) w^n \rangle
\right\},
\end{align}
with $u^{n+1} = u^n + w^n$, yielding from the integral mean value theorem
\begin{align}\label{eqn:deltab002}
g(u^{n+1}) - g(u^n) - g'(u^n) w^n = \int_0^1 \left( g'(u^n + tw^n) - g'(u^n)
\right)w^n \, dt. 
\end{align}
Applying a Cauchy-Schwarz inequality to \eqref{eqn:deltab002}, and the 
Lipschitz Assumption~\ref{assume:lipschitz}
\begin{align}\label{eqn:deltab003}
\left\langle f, \int_0^1 \left( g'(u^n + tw^n) - g'(u^n) \right) w^n\, dt 
\right\rangle 
& \le \nr{f} \f{\omega_{L,k}}{2}\nr{w^n}^2.
\end{align}
Putting together~\eqref{eqn:deltab002} and~\eqref{eqn:deltab003} into
\eqref{eqn:deltab001} yields the first result, ~\eqref{eqn:delta_bound}.
This shows where the Lipschitz estimate on Jacobian holds, 
the variance in $\delta_{k+1}$ from a strict reduction by factor $q_k$ is 
bounded with respect to the Lipschitz constant $\omega_{L,k}$.
\end{proof}

The next Corollary~\ref{cor:deltacon} establishes the monotonicity of 
the sequence $\{\delta_k\}_{k \ge K_0}$ for some finite $K_0$, assuming 
the sequence of residuals $\nr{r_k}$ is decreasing. This is imposed by assuming
$\delta$ is updated on the satisfaction of Criteria~\ref{criteria:delta}, 
meaning the iterations converge within the accepted rate on each refinement.
An easily imposed lower bound for $\delta$ is assumed.
\begin{assumption}\label{assume:deltamin} There is a constant $0 < \dmin < 1$ 
for which 
$0 < \dmin \le \delta_k$ for all $k \ge 0$. 
\end{assumption}
This assumption can be imposed in the definition of 
$\delta_{k+1}$ from $\delta_k$ but is given separately for 
clarity of presentation. The lower bound $\dmin$ is related to generally 
inaccessible constants which characterize the approximate discrete problems
in the next Assumption~\ref{assume:problemcapture}.  As a guideline,
$\dmin = 1/\GM$ is reasonable as $\delta$ tends to increase roughly as
$1/\gamma$.  A need to impose $\delta > \dmin$ was not encountered
in the numerical experiments presented in Section~\ref{sec:numerics}.
Along with~\ref{assume:deltamin}, a condition on the data of the 
approximate problems is assumed,
requiring the ratio of $\omega_{L,k}$,
the Lipschitz constant for the approximate problem on refinement $k$,
to $\nr{f_k}$,  
remains bounded.
\begin{assumption}\label{assume:problemcapture} There is a constant $B_{\omega f}$ 
for which after some refinement $K_1$
\begin{equation} 
\f {\omega_{L,k}} {\nr{f_k}} \le B_{\omega f}
~\tforall k \ge K_1, 
\end{equation}
and the convergence tolerance $\econ$ of Criteria~\ref{criteria:delta} satisfies
\begin{equation}
\econ^2\le \f{\dmin (1 - q_\delta/\bar q)}{\gamma_k^n \sigma_k^n}
\f{2}{ M^2_{\gamma \sigma}}B_{\omega f}^{-1}, ~\tforall k \ge K_1.
\end{equation}
\end{assumption}
\begin{corollary}[Convergence of $\delta$ to unity]\label{cor:deltacon}
Let Assumption~\ref{assume:LCsigma} and
the hypotheses of Lemma~\ref{lemma:deltabound} hold.
Then $\delta_{k+1}$ updated on satisfaction of Criteria~\ref{criteria:delta} with
$q_k < q_\delta < 1$ satisfies $\delta_{k+1} > \delta_k/\bar q$ \ for
$q_\delta < \bar q < 1$ whenever  
\begin{equation}\label{delta:mono}
\delta_{k} > \f{\gamma^n \sigma^n}{(1 - q_\delta/\bar q)} 
\f{\nr{r^n}^2 M_{\gamma \sigma}^2}{2}\f{\omega_{L,k}}{\nr {f_k}}.
\end{equation}
Additionally, if Assumptions~\ref{assume:deltamin} and~\ref{assume:problemcapture}
 hold, then~
\eqref{delta:mono} holds for all refinements $k > K_0> K_1$, so long as 
the residual
$\nr{r_{K_0}}$ satisfies 
\begin{equation}\label{delta:resimono}
\nr{r_{K_0}}^2\le \f{\dmin (1 - q_\delta/\bar q)}{\gamma^n \sigma^n}
\f{2}{ M^2_{\gamma \sigma}}B_{\omega f}^{-1},
\end{equation}
and $\delta_k = 1$ for all $k > K$ for some $K \le  \lceil  \log(\delta_{K_0})/
\log{\bar q} \rceil + K_0$, where $\lceil \cdot \rceil$ denotes the ceiling function.
\end{corollary}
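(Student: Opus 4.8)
The plan is to read the claimed growth $\delta_{k+1} > \delta_k/\bar q$ off the recursive representation~\eqref{eqn:delta_recursive} (equivalently, the lower estimate in~\eqref{eqn:delta_bound} of Lemma~\ref{lemma:deltabound}), after replacing $\nr{w^n}^2$ by a multiple of $\nr{r^n}^2$. Since $w^n$ solves the inexact iteration~\eqref{eqn:inexactit}, we have $w^n = \left(\alpha^n R_k + \gamma^n\{\sigma^n g'(u^n)+(1-\sigma^n)g'(\bar u)\}\right)^{-1} r^n$, so item (2) of Assumption~\ref{assume:LCsigma} gives $\nr{w^n} \le M_{\gamma\sigma}\nr{r^n}$. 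Feeding $\nr{w^n}^2 \le M_{\gamma\sigma}^2\nr{r^n}^2$ into~\eqref{eqn:delta_bound} yields the working bound
\[
\tilde\delta_{k+1} \ge \f{1}{q_k}\left(\delta_k - \f{\gamma^n\sigma^n\,\omega_{L,k}\,M_{\gamma\sigma}^2}{2\,\nr{f_k}}\,\nr{r^n}^2\right),
\]
where the truncation $\delta_{k+1} = \min\{1,\tilde\delta_{k+1}\}$ of Definition~\ref{def:deltaclosed} only strengthens the conclusion below (and forces $\delta_{k+1}=1$ once the right side of a comparison reaches $1$).

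For the first assertion, the right side of the display exceeds $\delta_k/\bar q$ exactly when $\delta_k(1-q_k/\bar q) > \gamma^n\sigma^n\,\omega_{L,k}\,M_{\gamma\sigma}^2\,\nr{r^n}^2/(2\nr{f_k})$. Since $q_k < q_\delta < \bar q$ forces $0 < 1-q_\delta/\bar q < 1-q_k/\bar q$, the right side of~\eqref{delta:mono} dominates $\gamma^n\sigma^n\,\omega_{L,k}\,M_{\gamma\sigma}^2\,\nr{r^n}^2/(2(1-q_k/\bar q)\nr{f_k})$, so~\eqref{delta:mono} implies that inequality; hence $\delta_{k+1} > \delta_k/\bar q$, with the convention that $\delta_{k+1}=1$ as soon as $\delta_k/\bar q \ge 1$.

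For the second assertion, I would bound the data-dependent factors uniformly: Assumption~\ref{assume:deltamin} gives $\delta_k \ge \dmin$, and Assumption~\ref{assume:problemcapture} gives $\omega_{L,k}/\nr{f_k} \le B_{\omega f}$ for $k \ge K_1$, so~\eqref{delta:mono} follows from the residual bound $\nr{r^n}^2 < 2\dmin(1-q_\delta/\bar q)(\gamma^n\sigma^n M_{\gamma\sigma}^2 B_{\omega f})^{-1}$. Each time Criteria~\ref{criteria:delta} triggers an update, either conditions (1)--(4) hold --- in which case~\eqref{as:delta1} makes the end-of-refinement residuals strictly decreasing, so $\nr{r^n}=\nr{r_k} < \nr{r_{K_0}}$ and~\eqref{delta:resimono} supplies the bound --- or conditions (1) and (5) hold with $\nr{r^n}<\econ$, in which case the $\econ$-clause of Assumption~\ref{assume:problemcapture} supplies it. Thus~\eqref{delta:mono}, hence $\delta_{k+1} > \delta_k/\bar q$, holds at every refinement $k > K_0 > K_1$, and iterating gives $\delta_{K_0+m} > \delta_{K_0}\,\bar q^{-m}$ while $\delta<1$. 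This reaches $1$ once $\bar q^m < \delta_{K_0}$, i.e. $m > \log\delta_{K_0}/\log\bar q$ (a positive number, since $\delta_{K_0},\bar q \in (0,1)$ make both logarithms negative); so $\delta_K = 1$ for some $K \le K_0 + \lceil \log\delta_{K_0}/\log\bar q\rceil$, and condition (1) of Criteria~\ref{criteria:delta} keeps $\delta_k=1$ for $k > K$.

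The main difficulty is bookkeeping rather than a genuine obstacle: keeping the three reduction factors $q_k < q_\delta < \bar q$ ordered correctly, reading ``$\delta_{k+1}>\delta_k/\bar q$'' as the monotone-growth-to-unity statement in the presence of the $\min$ in Definition~\ref{def:deltaclosed}, and pinning down which residual plays the role of $\nr{r^n}$ in~\eqref{delta:mono}--\eqref{delta:resimono} across refinements --- which is settled by the monotone decrease of end-of-refinement residuals enforced by~\eqref{as:delta1}. The bound $\nr{w^n}\le M_{\gamma\sigma}\nr{r^n}$ is the only use of Assumption~\ref{assume:LCsigma} here, and the sign discrepancy between the operator in~\eqref{eqn:inexactit} and the one written in Assumption~\ref{assume:LCsigma} is immaterial, since only the norm of its inverse is used.
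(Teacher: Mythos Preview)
Your proposal is correct and follows essentially the same route as the paper: bound $\nr{w^n}\le M_{\gamma\sigma}\nr{r^n}$ via Assumption~\ref{assume:LCsigma}, feed this into the lower estimate~\eqref{eqn:delta_bound} of Lemma~\ref{lemma:deltabound}, use $q_k<q_\delta$ to replace $q_k$ by $q_\delta$, then invoke Assumptions~\ref{assume:deltamin}--\ref{assume:problemcapture} and the residual decrease from Criteria~\ref{criteria:delta} to make~\eqref{delta:mono} hold uniformly and iterate. If anything, your write-up is tidier than the paper's in separating the two exit branches of Criteria~\ref{criteria:delta} (conditions (2)--(4) versus condition (5) with the $\econ$-clause of Assumption~\ref{assume:problemcapture}) and in tracking the $\min\{1,\cdot\}$ truncation explicitly.
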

\begin{proof}
Denoting the approximate Jacobian on iteration $n$ by
\[
M = \left(\alpha^n R  + \gamma^n \left( \sigma g'(u^n) + (1 - \sigma) g'(\bar u) \right) \right).
\]
In accordance with Assumption~\ref{assume:LCsigma}, 
$\nr{M^{-1}} \le M_{\gamma \sigma}$, and $\nr{w^n} \le \nr{r^n} M_{\gamma \sigma}$.
Then applying $q_\delta > q_k$
\begin{align}\label{eqn:deltamono001}
\f{\gamma^n \sigma^n}{(1 - q_\delta/\bar q)} 
\f{\nr{r^n}^2 M_{\gamma \sigma}^2}{2}\f{\omega_{L,k}}{\nr {f_k}}
 \ge \f{\gamma^n \sigma^n}{(1 - q_k/\bar q)} 
\f{\nr{w^n}^2}{2}\f{\omega_{L,k}}{\nr {f_k}}.
\end{align}
Satisfaction of~\eqref{delta:mono} then implies $\delta_k > \delta/\bar q$
when $\delta_k$ is greater than the quantity on the right of 
~\eqref{eqn:deltamono001}.
Rearranging terms in~\eqref{eqn:deltamono001} and comparison with
~\eqref{eqn:delta_bound} yields the first result
\begin{align}\label{eqn:deltamono002}
\delta_{k+1} \ge
\f 1 {q_k} \left(\delta_k - \f{\gamma \sigma \omega_{L,k}}{2 \nr f}\nr{w^n}^2 
\right)
\ge \f 1 {\bar q} \delta_k.
\end{align}
Similarly for the second result, $B_{\omega f}^{-1} < \nr{f_k}\omega_{L,k}$, 
$~\dmin < \delta_k$, and for $k \ge K_0$, Criteria~\ref{criteria:delta}, 
conditions either (2) or (4) and Assumption~\ref{assume:problemcapture} yield
$\nr{r_k} \le \nr{r_{K_0}}$,
 by which satisfaction of~\eqref{delta:resimono} implies
\begin{align}\label{eqn:deltamono003}
\nr{r_k}^2\le \f{\delta_k (1 - q_\delta/\bar q)}{\gamma^n \sigma^n}
\f{2}{ M^2_{\gamma \sigma}}\f{\nr{f_k}}{\omega_{K,L}}.
\end{align}
Rearranging terms again shows~\eqref{delta:mono}. So far, this shows
$\delta_{k+1} > \delta_k/\bar q$ for $k \ge K_0$.  A short calculation
and Definition~\ref{def:deltaclosed}
yield the final result, $\delta_k = 1$ after at most an additional
$\lceil  \log(\delta_{K_0})/\log{\bar q} \rceil$ refinements and
updates of $\delta$. 
\end{proof}

Importantly, the recursive definition 
\eqref{eqn:delta_recursive} shows $\delta$ is increased more aggressively
when the linearization error lies in the direction of $f$, and is increased
less and possibly decreased when $\cL^e(u^n)$ is in opposition to $f$.  
This sensitivity to direction as opposed to magnitude alone yields added 
stability to the method. More precisely, the following proposition characterizes
the conditions supporting increase of $\delta$ in terms of the projection of
the linearization error along $f$. The normalized vector in the direction of 
$f$ is denoted $\hat f = (1/\nr f) f$.
\begin{proposition}[Characterization of increase in $\delta$]
The sequence $\{ \delta_k\}$ given by the recursive definition
~\eqref{eqn:delta_recursive} satisfies $\delta_{k+1} > (1/ \bar q) \delta_k, ~ 1 < \bar q \le q_k$ whenever 
\begin{align}\label{eqn:del_increase}
\langle \hat f, \cL^e(u^n)\rangle > -{\delta_k}\left( 1 - \f{q_k}{\bar q}\right)
\f{\nr f}{\gamma^n_k \sigma^n_k}.
\end{align}
\end{proposition}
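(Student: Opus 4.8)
The plan is to start directly from the recursive definition~\eqref{eqn:delta_recursive}, which expresses $\delta_{k+1}$ in terms of $\delta_k$ and the inner product $\langle f, \cL^e(u^n)\rangle$. Since $\langle f, \cL^e(u^n)\rangle = \nr f \, \langle \hat f, \cL^e(u^n)\rangle$ by definition of $\hat f$, the relation becomes
\begin{equation*}
\delta_{k+1} = \f{1}{q_k}\left\{ \delta_k + \f{\gamma^n_k \sigma^n_k}{\nr f} \langle \hat f, \cL^e(u^n)\rangle \right\}.
\end{equation*}
So the entire statement reduces to an elementary manipulation: requiring $\delta_{k+1} > (1/\bar q)\delta_k$ is, after multiplying through by $q_k > 0$, the same as requiring $\delta_k + (\gamma^n_k \sigma^n_k/\nr f)\langle \hat f, \cL^e(u^n)\rangle > (q_k/\bar q)\delta_k$, i.e. $(\gamma^n_k \sigma^n_k/\nr f)\langle \hat f, \cL^e(u^n)\rangle > -\delta_k(1 - q_k/\bar q)$, which rearranges to exactly~\eqref{eqn:del_increase}.

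First I would note that when $\tilde\delta_{k+1} > 1$ the truncation in Definition~\ref{def:deltaclosed} sets $\delta_{k+1} = 1$, but since $\delta_k \le 1 < \bar q^{-1}\delta_k$ is automatic (as $\bar q < 1$ in the regime of interest, or more carefully one should track which sign convention on $\bar q$ is in force — the statement writes $1 < \bar q \le q_k$, so here $\bar q > 1$ and the bound $\delta_{k+1} > \bar q^{-1}\delta_k$ is a \emph{lower} bound that is weaker than $\delta_{k+1} \ge \delta_k$ when $\delta_{k+1}=1 \ge \delta_k$); so the clamped case is handled trivially and one only needs the inequality for the unclamped value $\tilde\delta_{k+1}$. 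Then I would carry out the rearrangement above on the unclamped expression and observe that the stated condition~\eqref{eqn:del_increase} is precisely equivalent to it. Because every step is reversible, this simultaneously shows the condition is not just sufficient but characterizes the increase, matching the word ``characterization'' in the statement.

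The only real point requiring care — and the step I expect to be the main (minor) obstacle — is the handling of the $\min\{1, \tilde\delta_{k+1}\}$ truncation together with the precise placement of the factor $\bar q$ and the inequality direction, since the proposition's hypothesis $1 < \bar q \le q_k$ has $\bar q$ on the opposite side of $1$ from its role in Corollary~\ref{cor:deltacon}. One must check that $\delta_{k+1} > (1/\bar q)\delta_k$ still follows in the truncated case: if $\tilde\delta_{k+1} \ge (1/\bar q)\delta_k$ fails only because $\tilde\delta_{k+1}$ was replaced by $1$, then we still have $\delta_{k+1} = 1 \ge \delta_k > (1/\bar q)\delta_k$ using $\bar q > 1$, so the conclusion holds. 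Otherwise $\delta_{k+1} = \tilde\delta_{k+1}$ and the equivalence with~\eqref{eqn:del_increase} is exact. This disposes of all cases and completes the proof.

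\begin{proof}
Recall from~\eqref{eqn:delta_recursive} that the unclamped update satisfies
\begin{equation}\label{eqn:delincrpf001}
\tilde\delta_{k+1} = \f{1}{q_k}\left\{ \delta_k + \f{\gamma^n_k \sigma^n_k}{\nr{f}^2} \langle f, \cL^e(u^n)\rangle \right\}
= \f{1}{q_k}\left\{ \delta_k + \f{\gamma^n_k \sigma^n_k}{\nr f}\langle \hat f, \cL^e(u^n)\rangle \right\},
\end{equation}
where the second equality uses $\langle f, \cL^e(u^n)\rangle = \nr f \,\langle \hat f, \cL^e(u^n)\rangle$. Since $q_k > 0$, the inequality $\tilde\delta_{k+1} > (1/\bar q)\delta_k$ is equivalent, after multiplying~\eqref{eqn:delincrpf001} by $q_k$ and isolating the inner-product term, to
\begin{equation}\label{eqn:delincrpf002}
\f{\gamma^n_k \sigma^n_k}{\nr f}\langle \hat f, \cL^e(u^n)\rangle > \left(\f{q_k}{\bar q} - 1\right)\delta_k = -\delta_k\left(1 - \f{q_k}{\bar q}\right),
\end{equation}
and dividing by $\gamma^n_k \sigma^n_k/\nr f > 0$ gives exactly~\eqref{eqn:del_increase}. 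As each step is reversible, \eqref{eqn:del_increase} holds if and only if $\tilde\delta_{k+1} > (1/\bar q)\delta_k$.

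It remains to pass from $\tilde\delta_{k+1}$ to $\delta_{k+1} = \min\{1, \tilde\delta_{k+1}\}$. If $\tilde\delta_{k+1} \le 1$, then $\delta_{k+1} = \tilde\delta_{k+1}$ and the conclusion $\delta_{k+1} > (1/\bar q)\delta_k$ is immediate from~\eqref{eqn:del_increase}. If instead $\tilde\delta_{k+1} > 1$, then $\delta_{k+1} = 1 \ge \delta_k > (1/\bar q)\delta_k$, the last inequality using $\bar q > 1$; so the conclusion holds in this case as well. This establishes $\delta_{k+1} > (1/\bar q)\delta_k$ under~\eqref{eqn:del_increase}.
\end{proof}
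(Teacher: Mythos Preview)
Your proof is correct and follows precisely the approach indicated in the paper, which simply states that the criterion~\eqref{eqn:del_increase} ``follows by direct calculation from~\eqref{eqn:delta_recursive}.'' You have supplied exactly that calculation, and in addition you handle the truncation $\delta_{k+1}=\min\{1,\tilde\delta_{k+1}\}$ explicitly, which the paper does not mention.
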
 
The criteria~\eqref{eqn:del_increase} follows by direct calculation from
\eqref{eqn:delta_recursive}. The lower bound on~\eqref{eqn:delta_bound} ensures
the increase in $\delta_k$ in the asymptotic regime as $\nr{w^n} \goto 0$ on 
a given refinement assuring the algorithm eventually convergences to the given 
unscaled data, $\delta = 1$; however, in practice the increase in $\delta_k$ is governed by ~\eqref{eqn:del_increase} through the preasymptotic and coarse mesh regimes.

\section{Algorithm}\label{sec:algorithm}
The results of the previous sections are summarized in the following algorithm,
and combined to yield a proof of convergence using iteration~\eqref{eqn:inexactit}.
The solver using inexact iteration~\eqref{eqn:inexactit} is implemented 
in an adaptive method according to the following basic algorithm.
The algorithm using ~\eqref{eqn:stabit} is the same, except $\delta = 1$ 
is assumed throughout. 

The additional criteria and parameter settings necessary to define the method are
summarized below the algorithm. The choice of parameters to
guarantee the monotonic decrease of $\gamma$ as in Corollary~\ref{cor:monodecrease}
is listed as optional. The selection of 
parameters $\dmin$ and $\econ$ to guarantee the monotonic 
increase in $\delta$ as in
Corollary~\ref{cor:deltacon} depend on 
unknown constants that are not assumed available. 
\begin{algorithm}[Algorithm using the inexact iteration~\eqref{eqn:inexactit}]
\label{algorithm:inexact}
Set the parameters $q_\gamma, ~\eps_T$ and $\GM$ in accordance with
Assumption~\ref{assume:maxgamma} and optionally~\ref{assume:monogamma}. 
Set the convergence tolerance $\econ$ and  $\dmin$ of 
Assumption~\ref{assume:deltamin}. Let $\bar u = 0$.
Start with initial $u^0$, $\gamma^0$, $\delta_0$. 
On partition $\cT_k, ~ k = 0, 1, 2, \ldots$
\begin{enumerateX}
\item Compute $R_k$ by Definition~\ref{def:Rk}, and $g'(\bar u)$.
\item Set $r^0 = -g(u^0) + \delta_0 f_0$, and set $\alpha_0 = \nr{r^0}$.
\item While Exit criteria~\ref{criteria:exit} are not met on iteration $n-1:$
\begin{enumerateY}
  \item Set $\sigma$ according to~\eqref{eqn:sigmadefTh}.
  \item Solve $(\alpha^n R_k + \gamma^n\{ \sigma^n g'(u^n) + 
               (1-\sigma)g'(\bar u) \}) 
               w^n = r^n,$ \ for $~w^n$.
  \item Update $u^{n+1} = u^n + w^n$, and $r^{n+1} = -g(u^{n+1}) + \delta_k f_k$.
  \item If Criteria~\ref{assume:gamma} are satisfied, update $\gamma^{n+1}$ 
       according to Definition~\ref{def:gamma}.
  \item Update $\alpha^n$ according to ~\eqref{update:alpha},
\end{enumerateY}
  \item If Criteria~\ref{criteria:delta} are satisfied, 
        update $\delta_{k+1}$ for partition $\cT_{k+1}$ according 
        to~\eqref{def:deltaclosed} with $q_\delta = (q_\gamma)^{P+1}$,
        where $P$ counts the number of times $\gamma$ was updated
        on refinement $k$.
  \item Compute the error indicators to determine the next
        mesh refinement.
\end{enumerateX}
\end{algorithm}

\begin{criteria}[Exit criteria]\label{criteria:exit}  Given a user set tolerance 
$\econ$, and an accepted rate of convergence given by $\qacc = 1-1/(2 \gamma)$
as in Criteria~\ref{criteria:delta}, and a baseline number of allowed iterations
$\ibase$, set the maximum number of iterations
$\imax$ by
\begin{equation}\label{set:imax}
 \iacc = \left\lceil \f{ \log\left( \nr{r_{k-1}}\right) - 
       \log  \left( \nr{r_k^0} \right) }{\log(\qacc)} \right\rceil + 1,
\quad \imax = \max\{\iacc, \ibase\}.
\end{equation}
Exit the solver on partition $\cT_k$ after calculating residual $\nr{r^{n+1}}$ 
when one of the following conditions holds.
\begin{enumerateX}
\item Condition (5) of Criteria~\ref{criteria:delta}: 
      the iteration reached convergence. 
\item Conditions (2-4) of Criteria~\ref{criteria:delta}:
      the iteration converges an at acceptable rate and is stopped before
      convergence. 
\item Maximum number of iterations $\imax$ completed: reset $\gamma_{k+1}^0,
      \delta_{k+1}, \an u_{k+1}^0$.
\end{enumerateX}
\end{criteria}
\begin{remark} Setting the maximum number of iterations by~\ref{set:imax}
       prevents a premature exit when the residual is converging close
       to the predicted rate.  
       An additional failure criteria may be added to exit the 
       iterations quickly if the convergence rate drifts substantially 
       higher that the accepted rate.  However, this is mainly useful on
       very coarse meshes where the problem is small and the linear solves
       take negligible time.
\end{remark}

\begin{remark}[Initial and reset values for $\gamma, ~\delta, ~u$] 
\label{remark:initialreset}
An initial set of values for $\gamma, ~\delta, \an u$ are not formally a 
feature of Algorithm~\ref{algorithm:inexact}, however 
some set of values is required to 
start the algorithm.  The examples shown in Section~\ref{sec:numerics} all use
\begin{equation}\label{eqn:initialset}
u_0^0 = 0, \quad \gamma^0_0 = \f{\nr{f_0}}{\nr{g'(u_0^0)}}, \quad \delta_0 = 1/\gamma^0_0.
\end{equation} 
If the iterations fail to converge ,{\em i.e.,} the iterations are terminated
on iteration $k$
by condition (3) of Criteria~\ref{criteria:exit}, $u$ is reset and
$\gamma \an \delta$ are updated from their previous values by
\begin{equation}\label{eqn:reset}
u_{k+1}^0 = 0, \quad \gamma_{k+1}^0 = \min\{2\gamma_k, \GM\}, 
\quad \delta_{k+1} = \max\{\delta_k/2, \dmin\}.
\end{equation}
Alternatively, $\gmo$ can take the place of $\GM$ in~\eqref{eqn:reset}.
\end{remark}
Allowing the possibility of reset 
is considered an essential part of the algorithm, as the 
method is designed for capturing steep layers starting
from a coarse mesh where the layers may be only partially uncovered 
or even completely 
undetected.  The approximate discrete problem can drastically change
over certain mesh refinements, and the latest update of parameters based on 
a milder approximate problem may not provide sufficient stability.

The next Assumption~\ref{assume:finitereset} supposes 
Algorithm~\ref{algorithm:inexact} resets only finitely-many times,
{\em e.g.,} after some iteration $K_A$ the sequence of problems enters the 
preasymptotic regime and iterations on subsequent refinements are terminated
by either the first or second condition of Criteria~\ref{criteria:exit}.
\begin{assumption}\label{assume:finitereset}
There exists a refinement $K_A$, for which 
either condition (1) or condition (2) of Criteria~\ref{criteria:exit}
is used to terminate the iterations on refinement $k$, for all $k \ge K_A$.
\end{assumption}

Regardless of whether the exact problem within the target class is well
posed, the approximate problems may not be.
The coarse mesh problems
are often ill-posed, may feature indefinite and ill-conditioned
Jacobians, and there is no 
guarantee that even an exact solution to a given approximate problem will
land in any basin of attraction for the approximate problem on the 
subsequent refinement.  

An analysis of the problem class where Algorithm~\ref{algorithm:inexact} can
be proven to converge, {\em i.e.,} where Assumption~\ref{assume:finitereset}
can be shown to hold, is beyond the scope of the current paper.  
The numerical examples in the next section demonstrate the performance of the
algorithm on a range of problems with steep internal and boundary layers, as
well as highly variable solution dependent frequencies in the diffusion 
coefficient. The following Theorem~\ref{thm:inexactconvergence} summarizes
the theoretical convergence properties of Algorithm~\ref{algorithm:inexact}
within the problem class characterized by Assumption~\ref{assume:finitereset}.

\begin{theorem}\label{thm:inexactconvergence}
Assume the hypothesis of 
Corollary~\ref{cor:monodecrease} and Corollary~\ref{cor:deltacon}, and let
Assumption~\ref{assume:finitereset} hold for $k \ge K_A$. 
Then the iterations on refinements $k \ge K_B$ for 
$K_B \le K_A + N_A + N_B$ with $N_A$ and $N_B$ given by~\eqref{eqn:con001}
and~\eqref{eqn:con002}, 
attain convergence up to tolerance $\econ$ with $\delta = 1$.  
Further assuming the hypotheses of
Theorem~\ref{thm:localconvergence}, there is 
a refinement $K_C$ for which $\gamma_k = 1$ for $k \ge K_C$. 
\end{theorem}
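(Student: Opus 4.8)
The plan is to prove the two assertions separately: (i) termination at tolerance $\econ$ with $\delta=1$ on all $\cT_k$, $k\ge K_B$, by pushing a geometric decay of the refinement residuals $\{\nr{r_k}\}$ into Corollary~\ref{cor:deltacon}; and (ii) $\gamma_k=1$ for all large $k$, from the local convergence-with-rate Theorem~\ref{thm:localconvergence} together with Corollary~\ref{cor:monodecrease}. Enlarging $K_A$ if necessary I assume $K_A\ge K_1$ (the threshold of Assumption~\ref{assume:problemcapture}) and $\delta_{K_A}<1$ (otherwise $\delta$ is already at its target on $\cT_{K_A}$). By Assumption~\ref{assume:finitereset} the iterations on each $\cT_k$, $k\ge K_A$, terminate by condition (1) or (2) of Criteria~\ref{criteria:exit}. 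Setting $q^\ast:=1-1/(2\gmo)\in[1/2,1)$, I would prove by induction on $k\ge K_A$ that $\nr{r_k}\le\max\{\econ,\ \nr{r_{K_A}}(q^\ast)^{k-K_A}\}$: exit by condition (1) gives $\nr{r_k}<\econ$, while exit by condition (2) means conditions (2)--(4) of Criteria~\ref{criteria:delta} hold, so in particular (by (2)--(3) and the displayed estimate in Section~\ref{sec:delta_update}) $\nr{r_k}<\nr{r_{k-1}}$ and $\nr{r_k}\le(1-1/(2\gamma_k^n))\nr{r_{k-1}}\le q^\ast\nr{r_{k-1}}$ using $\gamma_k^n\le\gmo$; the inductive hypothesis on $\nr{r_{k-1}}$ closes the bound. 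With
\begin{equation}\label{eqn:con001}
N_A:=\max\Bigl\{\,0,\ \bigl\lceil(\log\econ-\log\nr{r_{K_A}})/\log q^\ast\bigr\rceil\,\Bigr\},
\end{equation}
the geometric term is $\le\econ$ once $k\ge K_A+N_A$, so $\nr{r_k}\le\econ$ for all $k\ge K_A+N_A$.

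Next I would apply Corollary~\ref{cor:deltacon} at $K_0:=K_A+N_A$ ($>K_1$): the bound $\nr{r_{K_0}}\le\econ$ together with $\econ^2\le\frac{\dmin(1-q_\delta/\bar q)}{\gamma_k^n\sigma_k^n}\frac{2}{M_{\gamma\sigma}^2}B_{\omega f}^{-1}$ from Assumption~\ref{assume:problemcapture} is exactly the residual-smallness hypothesis~\eqref{delta:resimono}; the remaining hypotheses of that corollary are in force, and for $k\ge K_A$ with $\delta_k<1$ the termination of the iterations by condition (1) or (2) of Criteria~\ref{criteria:exit} is precisely the trigger of Criteria~\ref{criteria:delta} that updates $\delta$. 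Hence $\delta_k=1$ for every $k\ge K_0+N_B$ with
\begin{equation}\label{eqn:con002}
N_B:=\bigl\lceil\log(\delta_{K_A+N_A})/\log\bar q\bigr\rceil+1,
\end{equation}
and with $K_B:=K_A+N_A+N_B$ the iterations on $\cT_k$, $k\ge K_B$, terminate with $\delta_k=1$ and $\nr{r_k}\le\econ$, which is assertion (i).

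For assertion (ii), assume also the hypotheses of Theorem~\ref{thm:localconvergence}. For $k\ge K_B$ one has $\delta_k=1$, so the iterations on $\cT_k$ are precisely~\eqref{eqn:stabit} and the incoming iterate lies in the set $\cS$ of that theorem; hence the residuals on $\cT_k$ converge $q$-linearly with asymptotic rate $1-1/\gamma_k^n$. Then the observed ratios $\nr{r^{n+1}}/\nr{r^n}$ tend to $1-1/\gamma_k^n$ and consecutive ratios tend to one another, so while $\gamma_k^n>1$ the rate conditions~\eqref{as:absRateTol}--\eqref{as:relRateTol} of Criteria~\ref{assume:gamma} are eventually met; since $\imax$ from~\eqref{set:imax} allows enough iterations for the slower acceptable rate $\qacc$ — a fortiori the predicted rate $1-1/\gamma$ — to be attained and for $I_{\text{min}}$ iterations to pass, $\gamma$ is updated on that refinement. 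By Corollary~\ref{cor:monodecrease} each update with $\gamma_k^n>1$ satisfies $\gamma_k^{n+1}<\bar q\,\gamma_k^n$, so after at most $\lceil-\log(\gamma_{K_B})/\log\bar q\rceil$ updates (spread over finitely many refinements) one reaches $\gamma=1$; taking $K_C$ to be the first refinement beyond which no update with $\gamma>1$ occurs gives $\gamma_k=1$ for $k\ge K_C$.

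I expect the last step to be the main obstacle: one must rule out that the exit Criteria~\ref{criteria:exit} — which fires at the acceptable rate $1-1/(2\gamma)$, automatically beaten by the true rate $1-1/\gamma$ — always precedes the stricter $\gamma$-update Criteria~\ref{assume:gamma}, in which case $\gamma$ would never move. The resolution uses the full force of Theorem~\ref{thm:localconvergence} (convergence \emph{at the predicted rate}, not merely convergence), the fact that~\eqref{as:absRateTol} does not demand $\nr{r^n}<\nr{r_{k-1}}$ — unlike condition (2) of Criteria~\ref{criteria:exit} — and the rate-matched iteration budget~\eqref{set:imax} with baseline $\ibase$, which keep enough iterations available on each refinement for the predicted-rate regime to be entered. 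The first two steps, by contrast, are routine: an induction on the residual, and bookkeeping with the constants of Corollary~\ref{cor:deltacon}.
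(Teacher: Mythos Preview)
Your proposal is correct and follows essentially the same route as the paper: geometric decay of $\nr{r_k}$ at rate $1-1/(2\gmo)$ to reach $\econ$ after $N_A$ refinements, then invoke Corollary~\ref{cor:deltacon} via Assumption~\ref{assume:problemcapture} to push $\delta$ to unity in $N_B$ further refinements, then use Theorem~\ref{thm:localconvergence} together with Corollary~\ref{cor:monodecrease} for $\gamma\to 1$. Your formulas~\eqref{eqn:con001}--\eqref{eqn:con002} match the paper's up to cosmetic differences (you include a $\max\{0,\cdot\}$ and a $+1$), and your explicit induction on $\nr{r_k}$ is a cleaner articulation of what the paper calls ``direct calculation.'' The obstacle you flag at the end---that the exit criteria might always fire before the $\gamma$-update criteria---is precisely the point the paper names as ``subtle'' and resolves only by requiring $\econ$ small enough for the asymptotic rate of Theorem~\ref{thm:localconvergence} to kick in; your proposed resolution via the predicted-rate convergence and the iteration budget~\eqref{set:imax} is in the same spirit and, if anything, more explicit than the paper's treatment.
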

The convergence of $\gamma$ to one is not necessary for the convergence of the
iterations, but it is necessary for its efficiency, and the asymptotically
quadratic convergence of the resulting iterations as discussed in
~\cite{BaRo80a,KeKe98,Pollock14a,Pollock15a}.  
\begin{proof}
The convergence of the sequence of residuals $\nr{r_k} \goto 0$ within 
tolerance $\econ$ follows directly from Criteria~\ref{criteria:delta}.  
Assuming $\nr{r_{K_A}} > \econ$ and $\gamma \le \gmo$ from 
Assumption~\ref{assume:monogamma}, direct calculation shows after at most $N_A$
refinements, the iterations achieve convergence of the residual, 
with $N_A$ given by
\begin{equation}\label{eqn:con001}
N_A = \left\lceil \f{\log(\econ/\nr{r_{K_0}})}{\log(1 - 1/2\gmo)}  \right\rceil.
\end{equation}
In particular, $\nr{r_k} < \econ$ for $k \ge K_B = K_A + N_A$. From 
Corollary~\ref{cor:deltacon} and Assumption~\ref{assume:problemcapture}, 
it also holds if $\delta_{K_B}< 1$, the sequence $\delta_k, k \ge K_B$ 
increases monotonically to unity, and for $q_\gamma < \bar q < 1$ it follows that 
$\delta_k = 1$ 
for $k \ge K_C = K_B + N_B$, with 
\begin{align}\label{eqn:con002}
N_B = \lceil \log{\delta_{K_B}/\log \bar q} \rceil.
\end{align}
This establishes the first result, the convergence of the sequence of residuals
to zero for the problem defined with consistent data.

The second part of the argument  demonstrates the decrease of $\gamma$ 
to $1$ follows 
from the local convergence Theorem~\ref{thm:localconvergence}. Assuming the 
residual is small enough, then the rate $\nr{r^{n+1}}/\nr{r^n}$
asymptotically approaches $1- 1/\gamma$, and in particular is within
$\eps_T$ tolerance of the predicted rate, meaning Criteria~\ref{assume:gamma}
hold, and $\gamma$ is updated by Definition~\ref{def:gamma}; and, by
Corollary~\ref{cor:monodecrease} converges to one after at most
an additional $N_C = \lceil -\log(\gmo)/\log \bar q \rceil$ updates.
The subtle point is the convergence tolerance $\econ$ must be small enough
for the asymptotic convergence rate of Theorem~\ref{thm:localconvergence}
to be assured.
\end{proof}
\begin{remark}It is not just an artifact of the analysis that $\gamma \goto 1$
cannot be assured until after the iterations already achieve convergence to 
tolerance on every iteration, and that the tolerance $\econ$ must be 
sufficiently small.  The phenomenon of $\gamma$ stalling above one and 
failing to update for several iterations after $\nr{r_k} < \econ$ has been 
observed, in particular in problems similar to Example~\ref{example2}
in the next section, where a diffusion coefficient with a high frequency
dependence on the solution fails to resolve until the mesh in the 
highest-frequency region is sufficiently fine.  It is more commonly
observed that the iterations do converge at the predicted rate when the 
residual is far from convergence, but generally $\gamma \goto 1$ within 
a few iterations on either side of the residual converging to tolerance.
\end{remark}

\section{Numerical Examples}\label{sec:numerics}
The numerical experiments were performed with a Python implementation
of the FEniCS library~\cite{LMW12a}, 
and additional Numpy~\cite{JOP01a} computations.
For each of the examples shown, Algorithm~\ref{algorithm:inexact} is demonstrated 
solving for both a known and an unknown solution.  The unknown solution examples
demonstrate the effectiveness of the method for cases where the variable dependent
and solution dependent layers may not match up. 
For the source functions designed to produce a known solution,
the $H^1$ and $L_2$ errors denoted by $|u - u_k|_1$ and
$|u - u_k|_0$, respectively,
are shown along with the error estimator; otherwise, the error estimator is 
shown to decay at the expected rate
of $n^{-1/2}$ for degrees of freedom $n$, after the resolution of the layers. 

The examples are all run with reduction factor $q_\gamma = 0.9$ and 
$\sigma_0 = 0.9$.  The D\"orfler parameter of~\eqref{eqn:dorflermark} 
is set to $\theta = 0.2$, at the tolerance $\econ = 10^{-7}$. 
Example~\ref{example2} with the unknown source~\eqref{ex2:source2} uses 
rate tolerance $\eps_T = 0.01$, and all other examples use $\eps_T = 0.005$.
The higher rate tolerance is used to prevent the update of $\gamma$ from 
stalling in cases where a high mesh resolution is necessary to accurately
capture the Jacobian.  Perturbing these parameter settings may change 
the number of refinements or the number of resets taken for the 
algorithm to arrive at the asymptotic regime, but not the end result.
The initial and reset values of $u, \gamma$, and $\delta$ are set as in
Remark~\ref{remark:initialreset} in accordance with 
Assumption~\ref{assume:gamma} but not necessarily 
Assumption~\ref{assume:monogamma}. Nonetheless, monotonicity of the parameters
is generally observed on iterations when the algorithm does not reset.
Each example is started on a uniform mesh of 144 triangular elements.
\begin{example}[Steep internal layer]\label{example1} Consider the quasilinear 
diffusion equation on $\Omega = [0,1] \times [0,1]$.
\begin{align}\label{eqn:ex1}
F(u,x) \coloneqq - \div(\kappa(u) \grad u) - f(x,y) = 0 \inn \Omega, 
~ u = 0 \on \pa \Omega.
\end{align}
The solution dependent diffusion coefficient is given by
\begin{align}
\kappa(s) = k + \f{1}{\eps + (s-a)^2}, \quad \with~ a = 0.5, ~k = 1, ~\an 
~\eps = 6 \times 10^{-5}.
\end{align}
The following source functions are considered
\begin{align}\label{ex1:source1}
f(x,y)& \text{ chosen so }  u(x,y) = \sin (\pi x) \sin (\pi y), 
\\ \label{ex1:source2}
f(x,y)& = 10^5 \cdot \left( 0.5 - \left|x - 0.5 \right| \right)^2
\left(0.5 - \left| y - 0.5 \right| \right).
\end{align}
\end{example}
\begin{figure}
\centering
\includegraphics[trim=90pt 145pt 90pt 145pt, 
clip=true,width=0.4\textwidth]{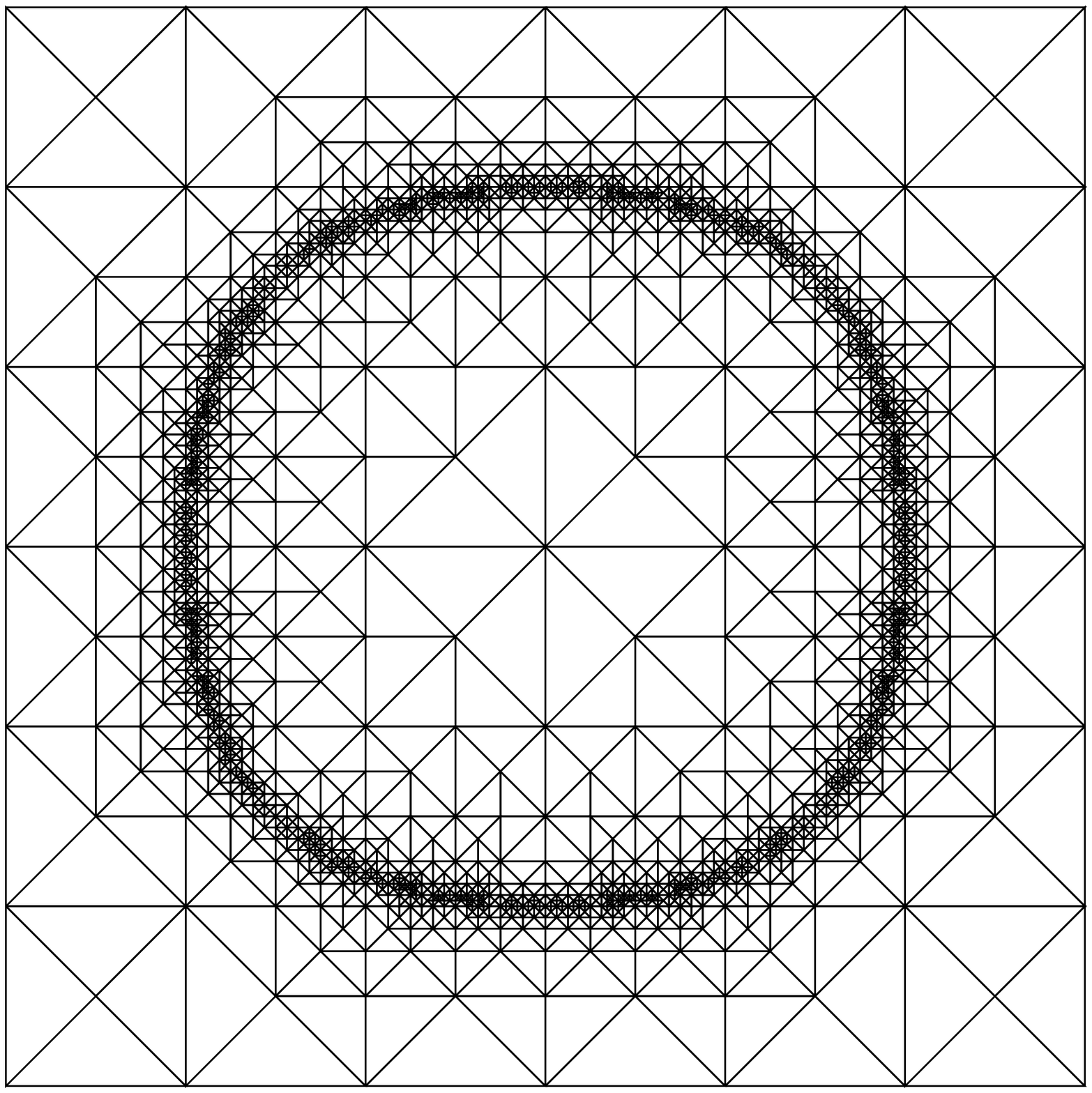}~\hfil~
\includegraphics[trim=90pt 145pt 90pt 145pt, 
clip=true,width=0.4\textwidth ]{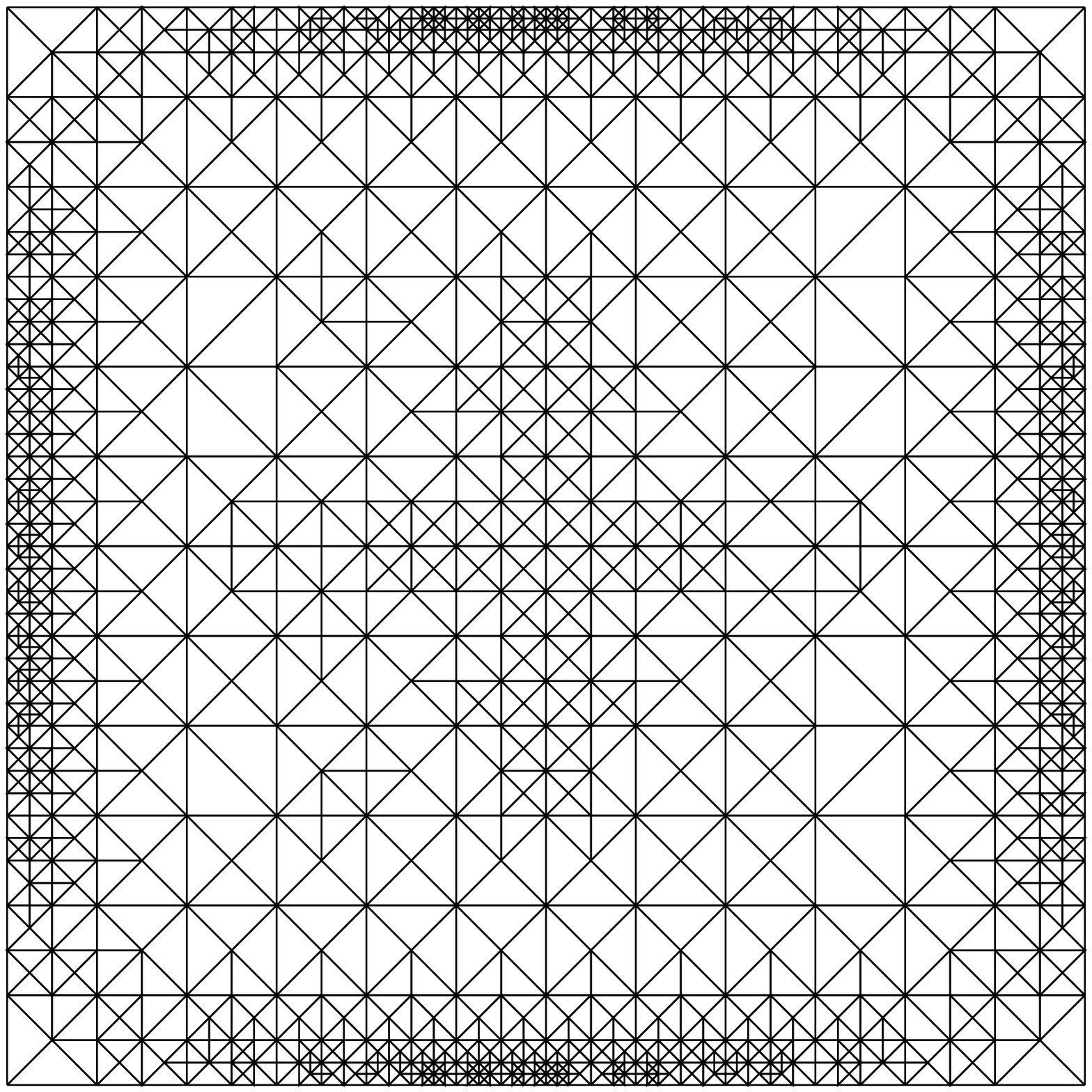}
\caption{Adpative meshes from Example~\ref{example1}.
Left: mesh after 20 iterations with 4140 elements for source~\eqref{ex1:source1}.
Right: mesh after 25 iterations with 2208 elements for source~\eqref{ex1:source2}.
}
\label{fig:ex1mesh}
\end{figure}
The diffusion coefficient $\kappa(s)$ is bounded away from zero with 
$\kappa(s), ~\kappa'(s)$ and $\kappa''(s)$ all bounded assuring 
uniqueness of the solution as in~\cite{CaRa94}. 
Milder versions of this model problem with source~\eqref{ex1:source1} 
are considered by the author in~\cite{Pollock14a,Pollock15a}. 
The inexact iteration~\eqref{eqn:inexactit} allows the solver to stabilize on a
coarser mesh and with steeper gradients in the diffusion term than in previous
versions of the method. Figure~\ref{fig:ex1mesh} on the left shows an adaptive
mesh in the preasymptotic regime for Example~\ref{example1} with source
~\eqref{ex1:source1} showing local refinement of the internal layer with the mesh
remaining coarse away from the layer.  The mesh on the right shows the 
boundary and internal layer resolution for Example~\ref{example1} with 
source~\eqref{ex1:source2}.

Figure~\ref{fig:ex1s1_error_gd} shows the $H^1$ error, 
the $L_2$ error and the error estimator running from the course mesh regime 
through the preasymptotic and into asymptotic regime.  The transition from the 
coarse mesh to preasymptotic is captured by the increase in error within
the first twelve iterations, and through the nonmonotonicity of parameters
$\gamma$ and $\delta$, shown on the right.  The decreases in $\delta$, and
the corresponding increases in $\gamma$ during this phase are due
to reset of the algorithm. The transition from the preasymptotic to the 
asymptotic regime is displayed as a sharp decrease in $L_2$ error and the
estimator $\eta$ as $\delta$ approaches one and the iteration solves for 
a right hand side consistent with the problem data.  This sharp drop can be 
smoothed over several iterations 
by allowing a few preliminary refinements based only on the variable
dependent data or otherwise stalling the initial increase of $\delta$, 
but these techniques were not used in the current work in order to better
illustrate the range of possible behaviors of the inexact method.  
In the plot of the parameters $\gamma$ and $\delta$, 
Figure~\ref{fig:ex1s1_error_gd} also illustrates how 
the initial relationship $\gamma = 1/\delta$
is approximately maintained when the
iterations maintain stability through the mesh refinements.

Figure~\ref{fig:ex1s2_error_gd} shows the behavior of the error estimator
next to a plot of the parameters $\gamma \an \delta$ for the diffusion 
problem of Example~\ref{example1} with a nonsmooth source~\eqref{ex1:source2}.
The initial coarse mesh problem is a bad representation of the exact problem
resulting in early resets of the algorithm, after which enough of the data
is captured to maintain stability through the mesh refinements.  In contrast to 
the same diffusion operator with source~\eqref{ex1:source1}, here the error
estimator grows throughout the preasymptotic regime as $\delta$ increases to one,
then decreases at the predicted rate of $n^{-1/2}$ with respect to degrees of
freedom $n$.
\begin{figure}
\centering
\includegraphics[trim=0pt 0pt 0pt 0pt, clip=true,width=0.45\textwidth]{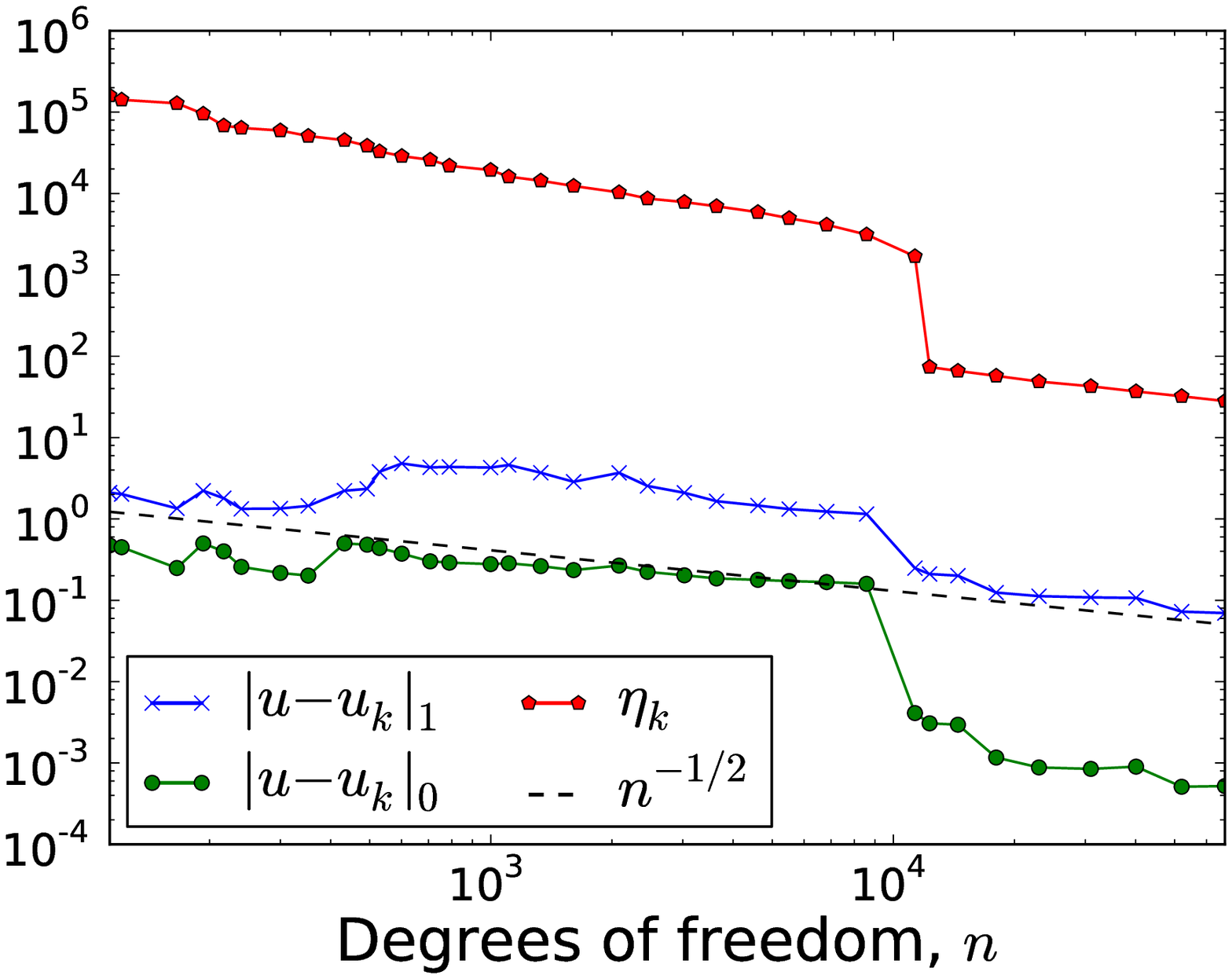}
\includegraphics[trim=0pt 0pt 0pt 0pt, clip=true,width=0.45\textwidth ]{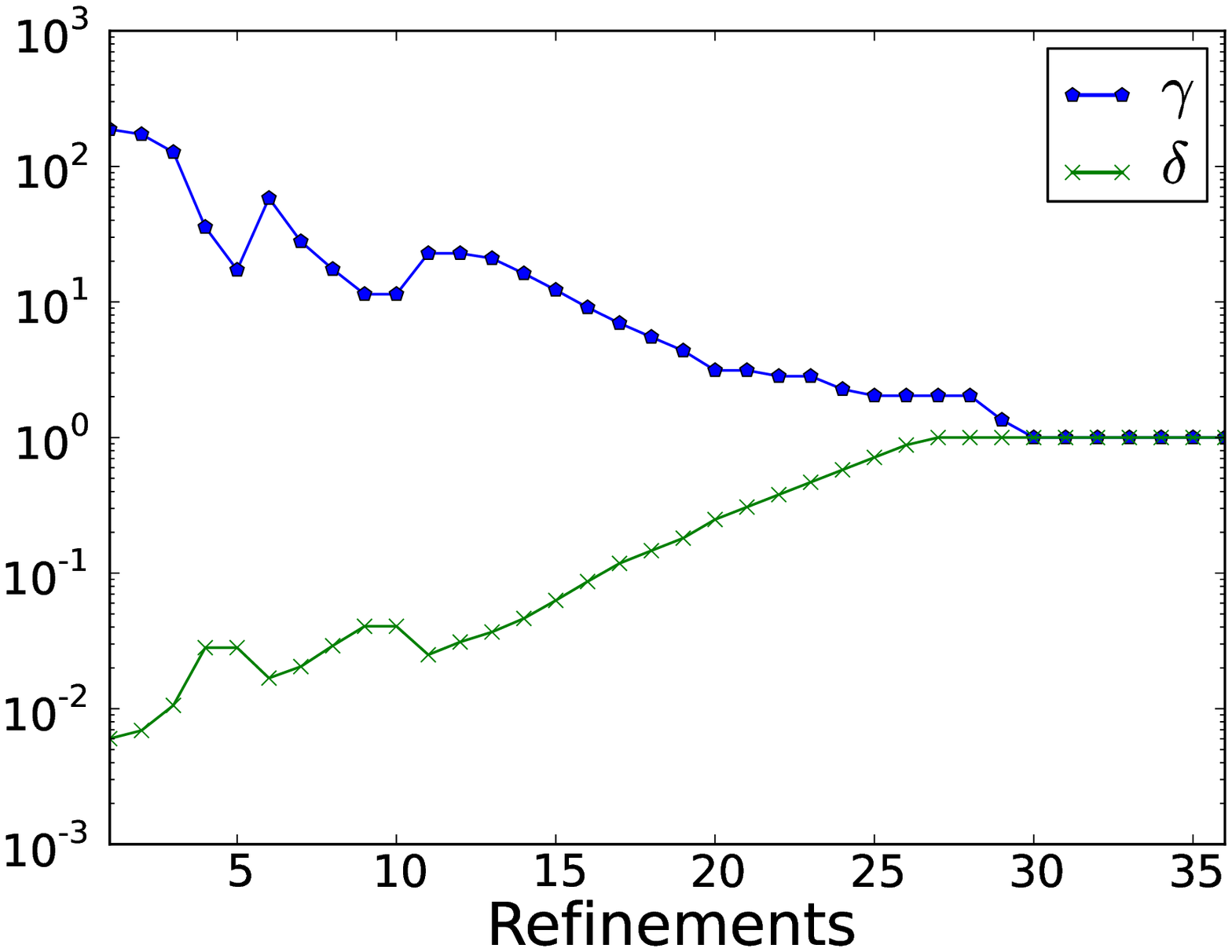}
\caption{Left: $H^1$ error, $L_2$ error,  and error estimator (above) 
against $n^{-1/2}$ where $n$ is the number of degrees of freedom. 
Right: parameters $\gamma$ and $\delta$ 
 for Example~\ref{example1} with source~\eqref{ex1:source1}.}
\label{fig:ex1s1_error_gd}
\end{figure}
\begin{figure}
\centering
\includegraphics[trim=0pt 0pt 0pt 0pt, clip=true,width=0.45\textwidth]{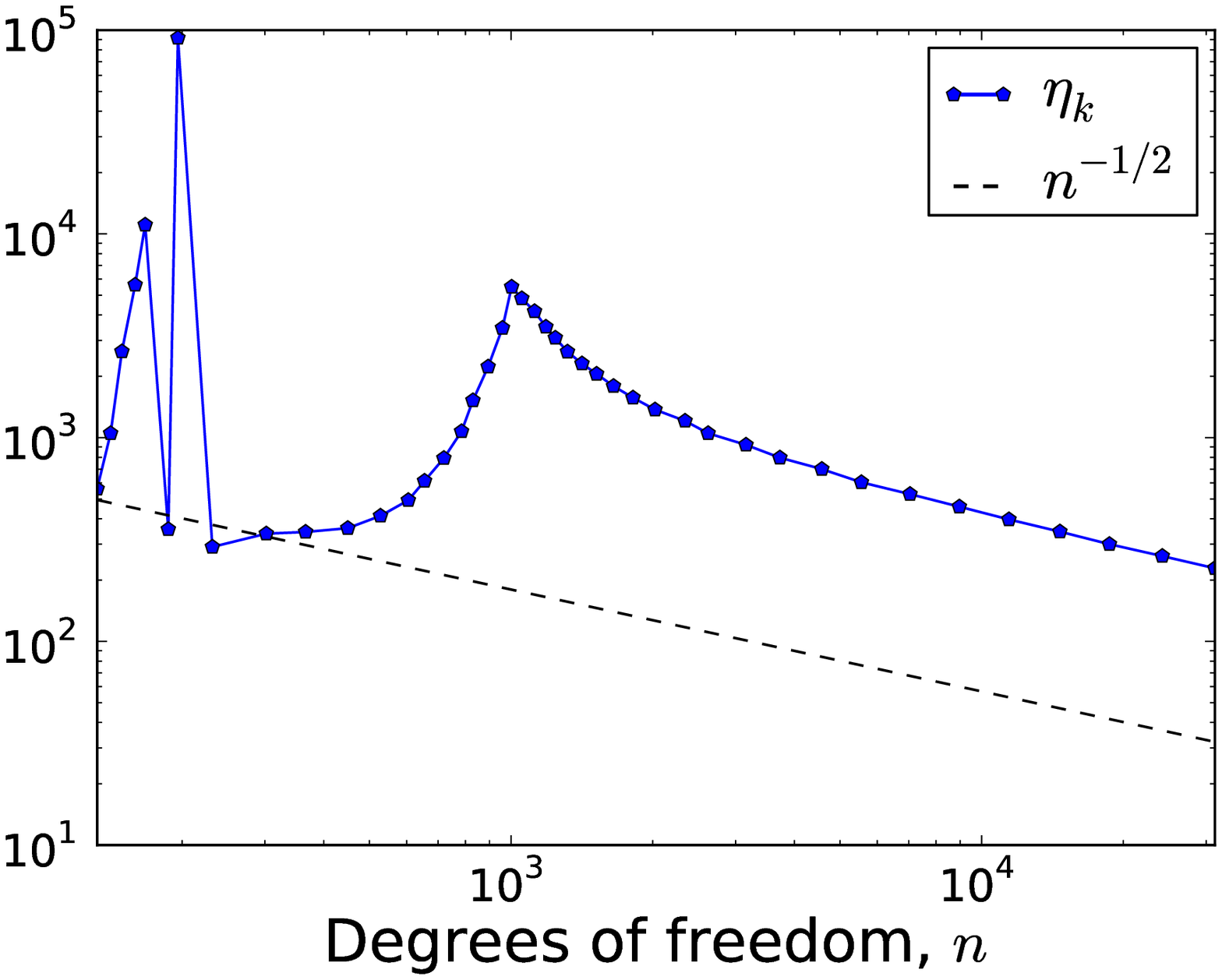}
\includegraphics[trim=0pt 0pt 0pt 0pt, clip=true,width=0.45\textwidth ]{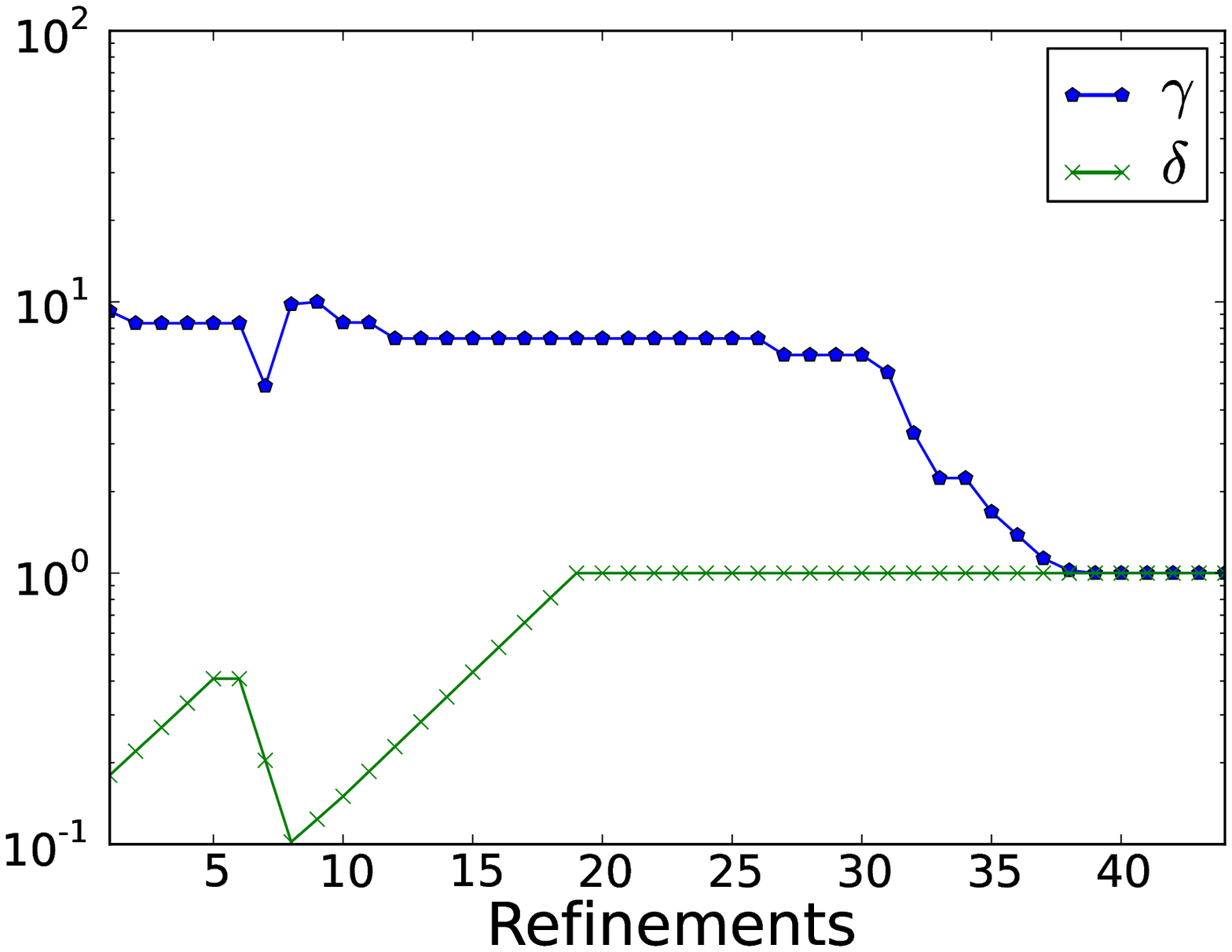}
\caption{Left: Error estimator $\eta$ against $n^{-1/2}$ where $n$ is the number of degrees of freedom. 
Right: parameters $\gamma$ and $\delta$ 
 for Example~\ref{example1} with source~\eqref{ex1:source2}.}
\label{fig:ex1s2_error_gd}
\end{figure}
\begin{example}[Oscillatory diffusion with several layers]\label{example2}
Consider the quasilinear diffusion equation on $\Omega = [0,1]\times[0,1]$.
\begin{align}\label{eqn:ex2}
F(u,x) \coloneqq - \div(\kappa(u) \grad u) - f(x,y) = 0 \inn \Omega, 
~ u = 0 \on \pa \Omega.
\end{align}
The solution dependent diffusion is given by
\begin{align}\label{eqn:ex2kappa}
\kappa(s) = k + \sin(s/\eps) + \arctan(s/\eps), \quad \with 
~\eps = 6\times 10^{-3}.
\end{align}
The following source functions are considered, with the $k$ 
from~\eqref{eqn:ex2kappa} set to $k = 2 + \pi/2$ with source~\eqref{ex2:source1}
and $k = 10$ with source~\eqref{ex2:source2}.
\begin{align}\label{ex2:source1}
f(x,y)& \text{ chosen so }  u(x,y) = 1.85^4\cdot x(x-1)y(y-1)-
2^8\cdot x^{9/4}(x-1)^2y^2(y-1)^2, 
\\ \label{ex2:source2}
f(x,y)& = (1-x)(1-y) (e^{8x^2}-1)(e^{8y^2}-1).
\end{align}
\end{example}
\begin{figure}
\centering
\includegraphics[trim=0pt 0pt 0pt 0pt, clip=true,width=0.45\textwidth]{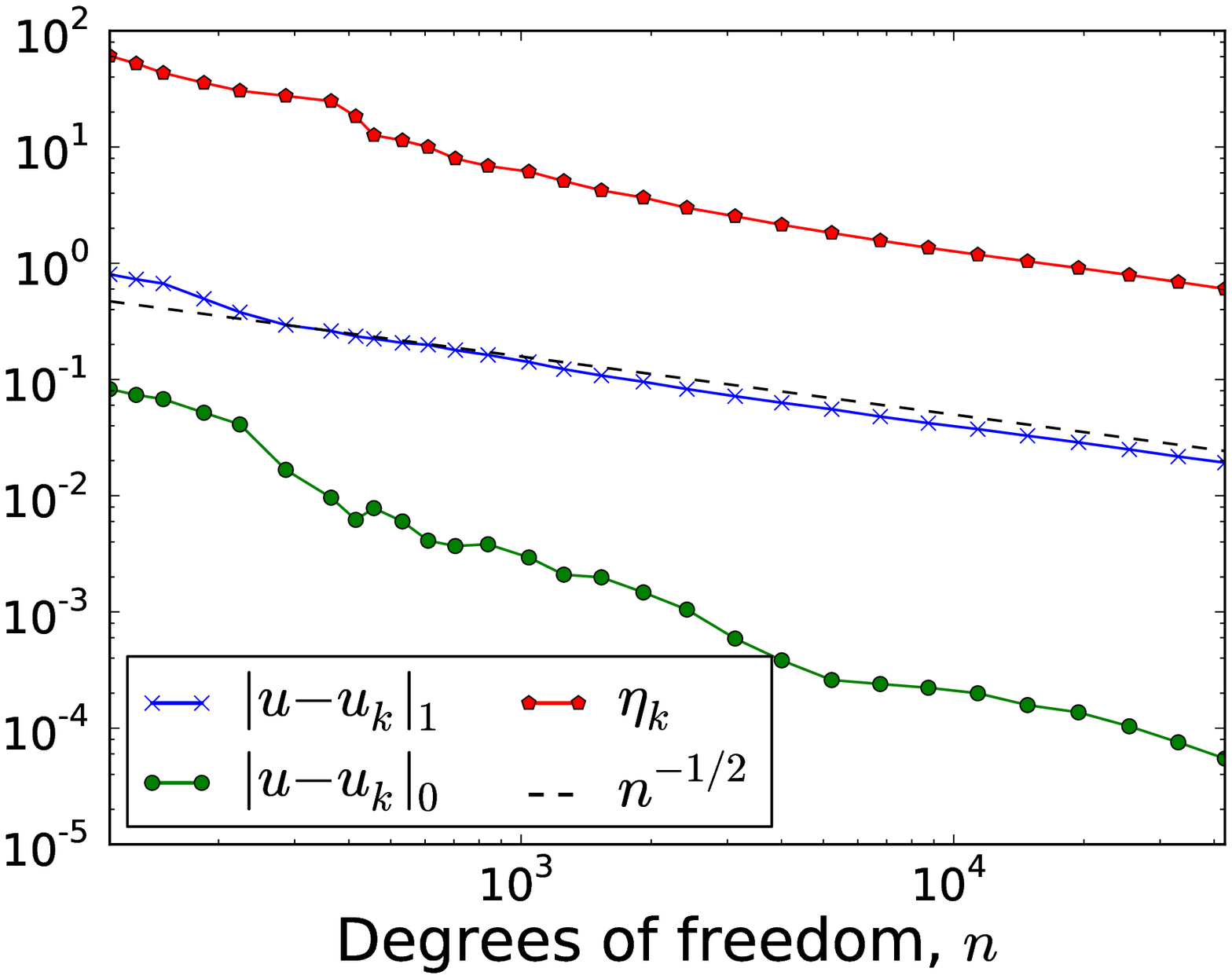}
\includegraphics[trim=0pt 0pt 0pt 0pt, clip=true,width=0.45\textwidth ]{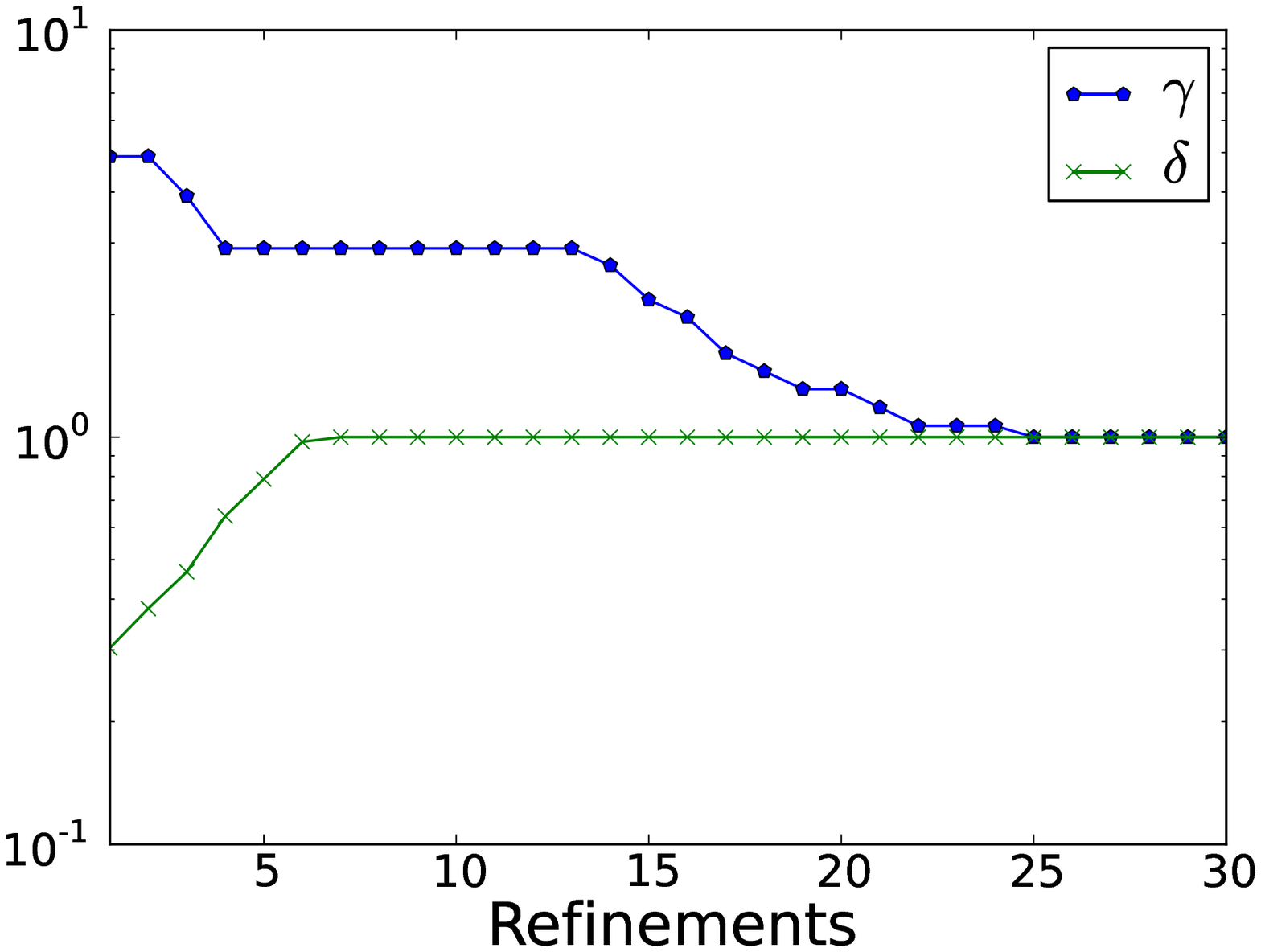}
\caption{Left: $H^1$ error, $L_2$ error and error estimator (above) against $n^{-1/2}$ where $n$ is the number of degrees of freedom. Right: parameters $\gamma$ and $\delta$ 
 for Example~\ref{example2} with source~\eqref{ex2:source1}.}
\label{fig:ex2s1_error_gd}
\end{figure}
\begin{figure}
\centering
\includegraphics[trim=0pt 0pt 0pt 0pt, clip=true,width=0.45\textwidth]{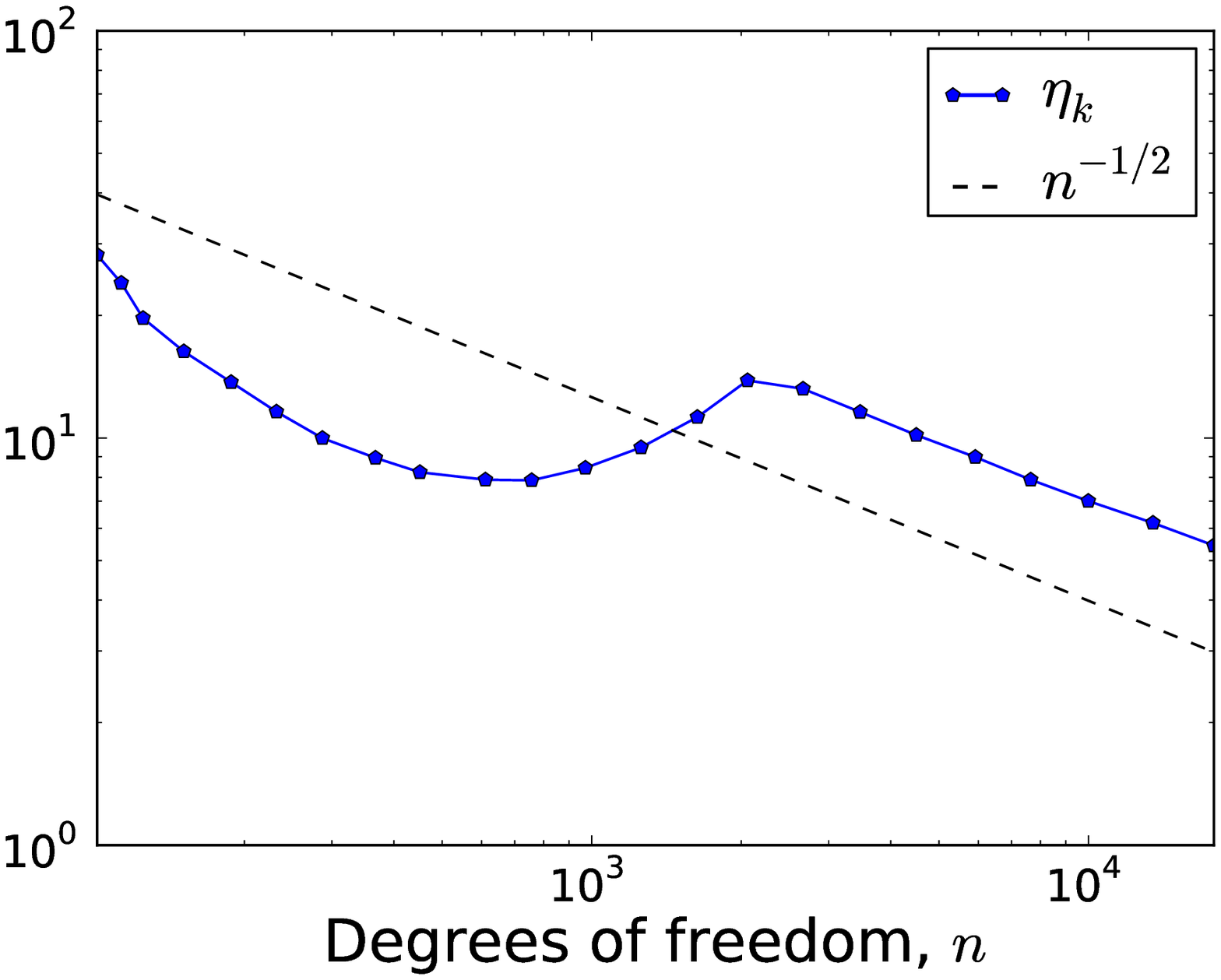}
\includegraphics[trim=0pt 0pt 0pt 0pt, clip=true,width=0.45\textwidth ]{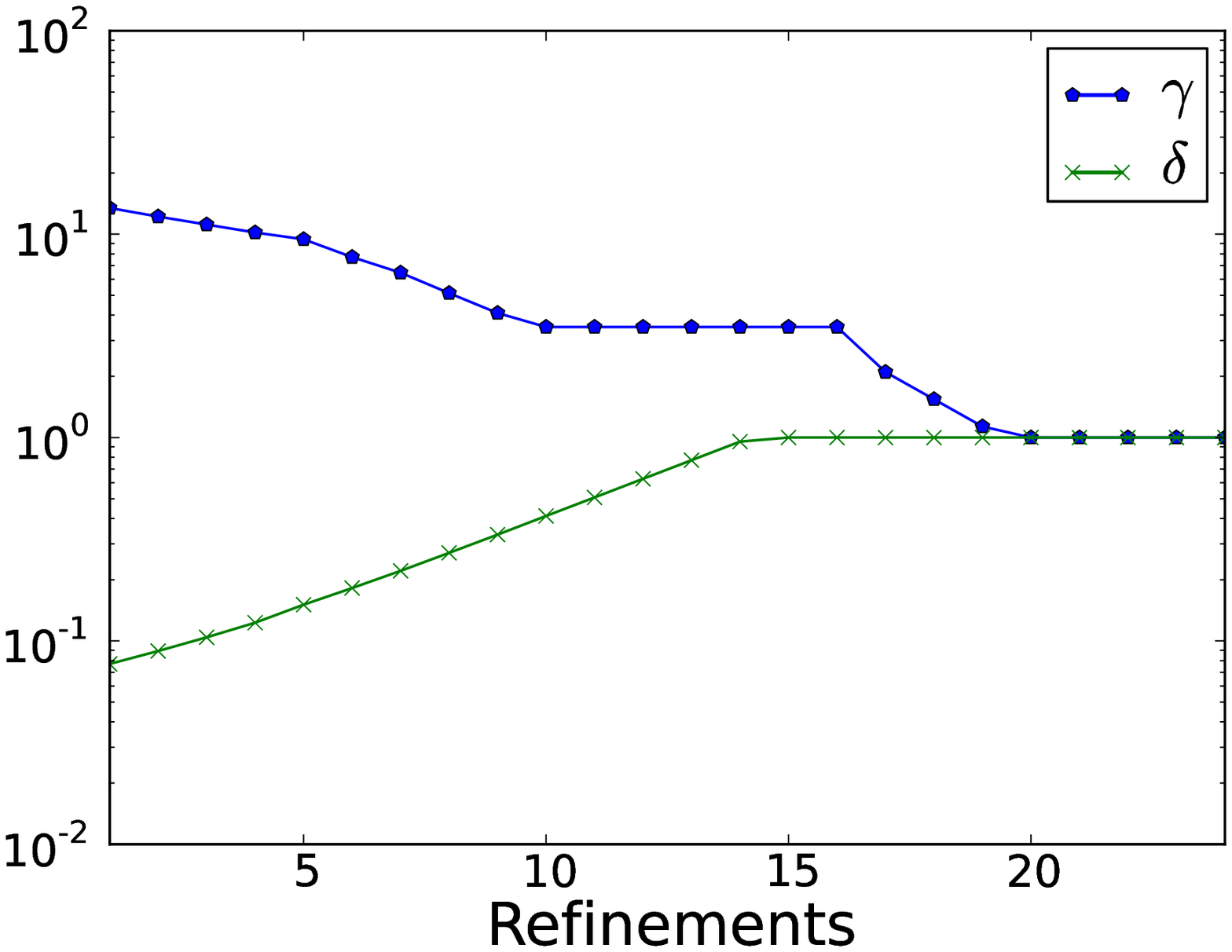}
\caption{Left: Error estimator against $n^{-1/2}$ where $n$ is the number of degrees of freedom. Right: parameters $\gamma$ and $\delta$ 
 for Example~\ref{example2} with source~\eqref{ex2:source2}.}
\label{fig:ex2s2_error_gd}
\end{figure}
By the same reasoning as Example~\ref{example1}, this problem has a unique
solution.
Example~\ref{example2} demonstrates the inexact method on a problem where in
addition to steep gradients, the diffusion coefficient features oscillations
proportional in frequency to the magnitude of solution $u$.  
Similar to the first example
the error decreases throughout all iterations with minor fluctuations for
the source with the known solution~\eqref{ex2:source1}. In this case for the
source corresponding to an unknown solution~\eqref{ex2:source2}, the 
estimator first decreases then stabilizes and 
increases as $\delta$ increases to one, and then decreases at a steady rate.

For both source functions in Example~\ref{example2}, the decrease in $\gamma$
stalls for several iterations in the preasymptotic regime.  This shows
the iterations are not converging at the expected rate for several refinements,
presumably because the
solution-dependent frequencies of the diffusion coefficient $\kappa(u)$,  
are not be sufficiently well expressed on the mesh as the solution $u$ first
attains a threshold frequency. 

\begin{example}[Gradient-dependent diffusion coefficient]\label{example3}
Consider the quasilinear 
diffusion equation on $\Omega = [0,1] \times [0,1]$.
\begin{align}\label{eqn:ex3}
F(u,x) \coloneqq - \div(\kappa(|\grad u|^2) \grad u) - f(x,y) = 0 \inn \Omega, 
~ u = 0 \on \pa \Omega.
\end{align}
The solution dependent diffusion coefficient is given by
\begin{align}
\kappa(t^2) = k + \arctan((t^2 - a)/\eps), \quad \with~ a = \pi, 
~k = \pi, ~\an ~\eps = 2 \times 10^{-2}.
\end{align}
The following source functions are considered
\begin{align}\label{ex3:source1}
f(x,y)& \text{ chosen so }  u(x,y) = \sin (\pi x) \sin (\pi y),  
\\ \label{ex3:source2}
f(x,y)& = 2 \times 10^3 \cdot \left( 0.5 - \left|x - 0.5 \right| \right)^2
\left(0.5 - \left| y - 0.5 \right| \right)^2.
\end{align}
\end{example}
\begin{figure}
\centering
\includegraphics[trim=0pt 0pt 0pt 0pt, 
clip=true,width=0.45\textwidth]{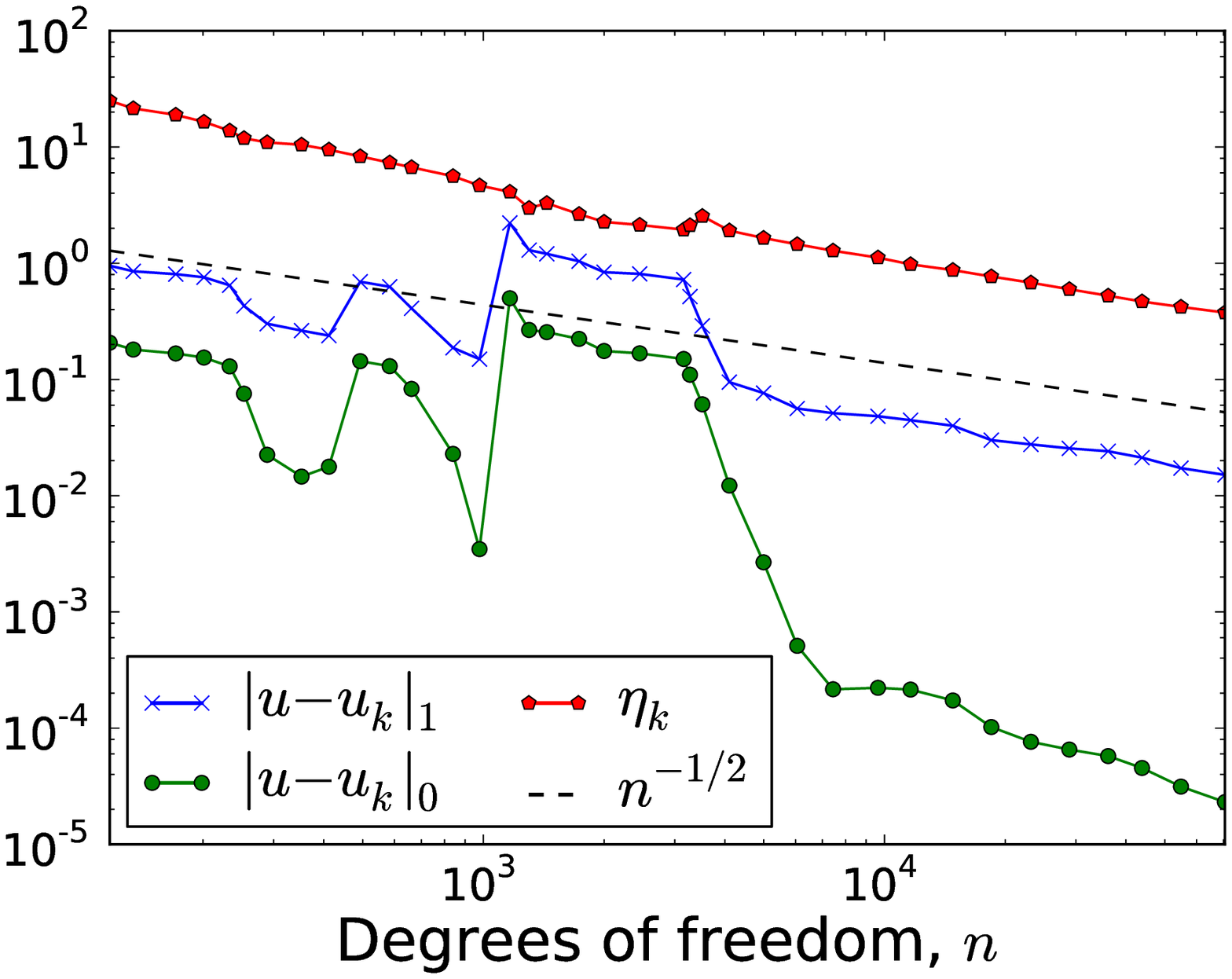}
\includegraphics[trim=0pt 0pt 0pt 0pt, 
clip=true,width=0.45\textwidth ]{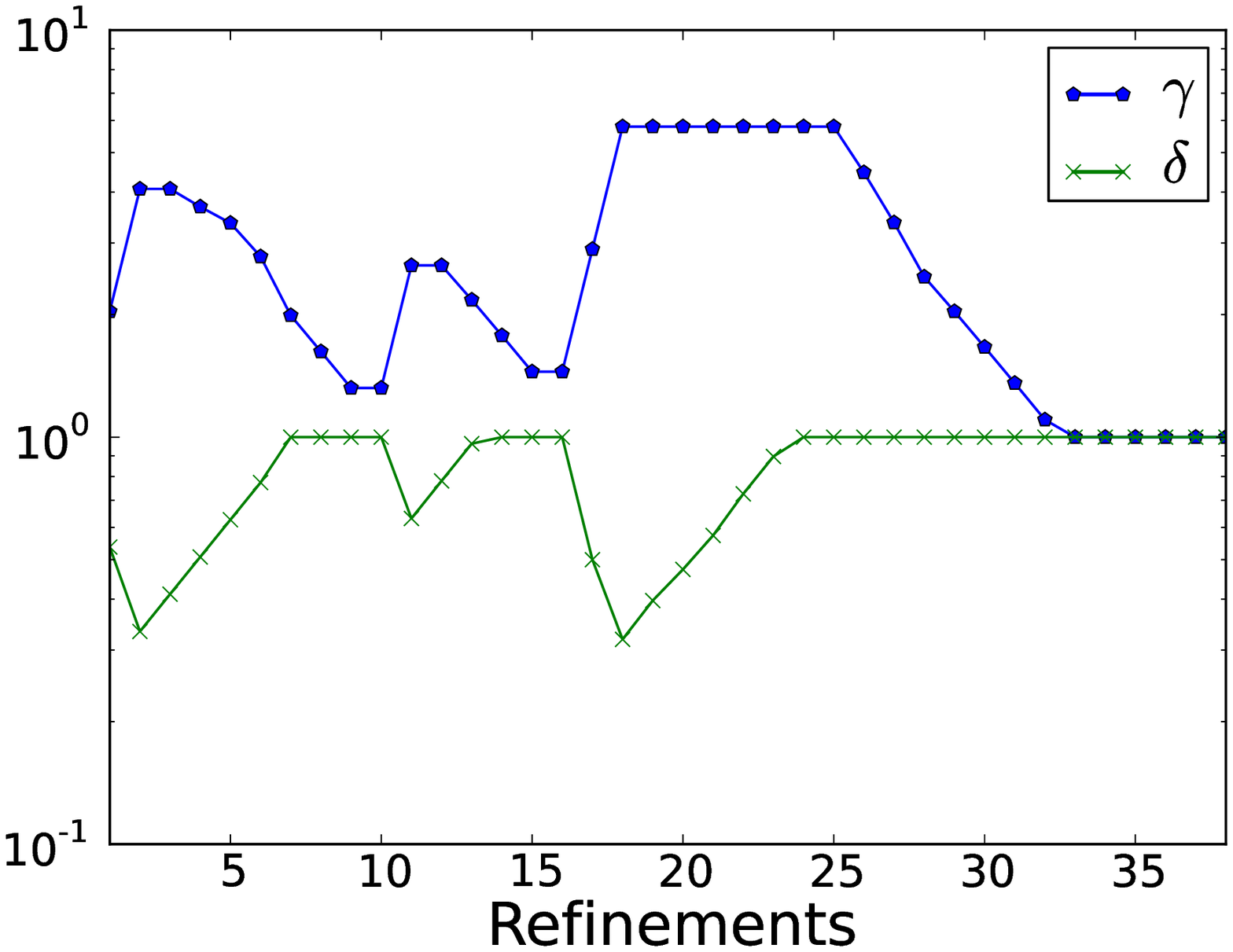}
\caption{Left: $H^1$ error, $L_2$ error and error estimator (above) against 
                $n^{-1/2}$ where $n$ is the number of degrees of freedom. 
         Right: parameters $\gamma$ and $\delta$ 
                for Example~\ref{example3} with source~\eqref{ex3:source1}.}
\label{fig:ex3s1_error_gd}
\end{figure}
\begin{figure}
\centering
\includegraphics[trim=90pt 125pt 90pt 160pt, 
clip=true,width=0.3\textwidth]{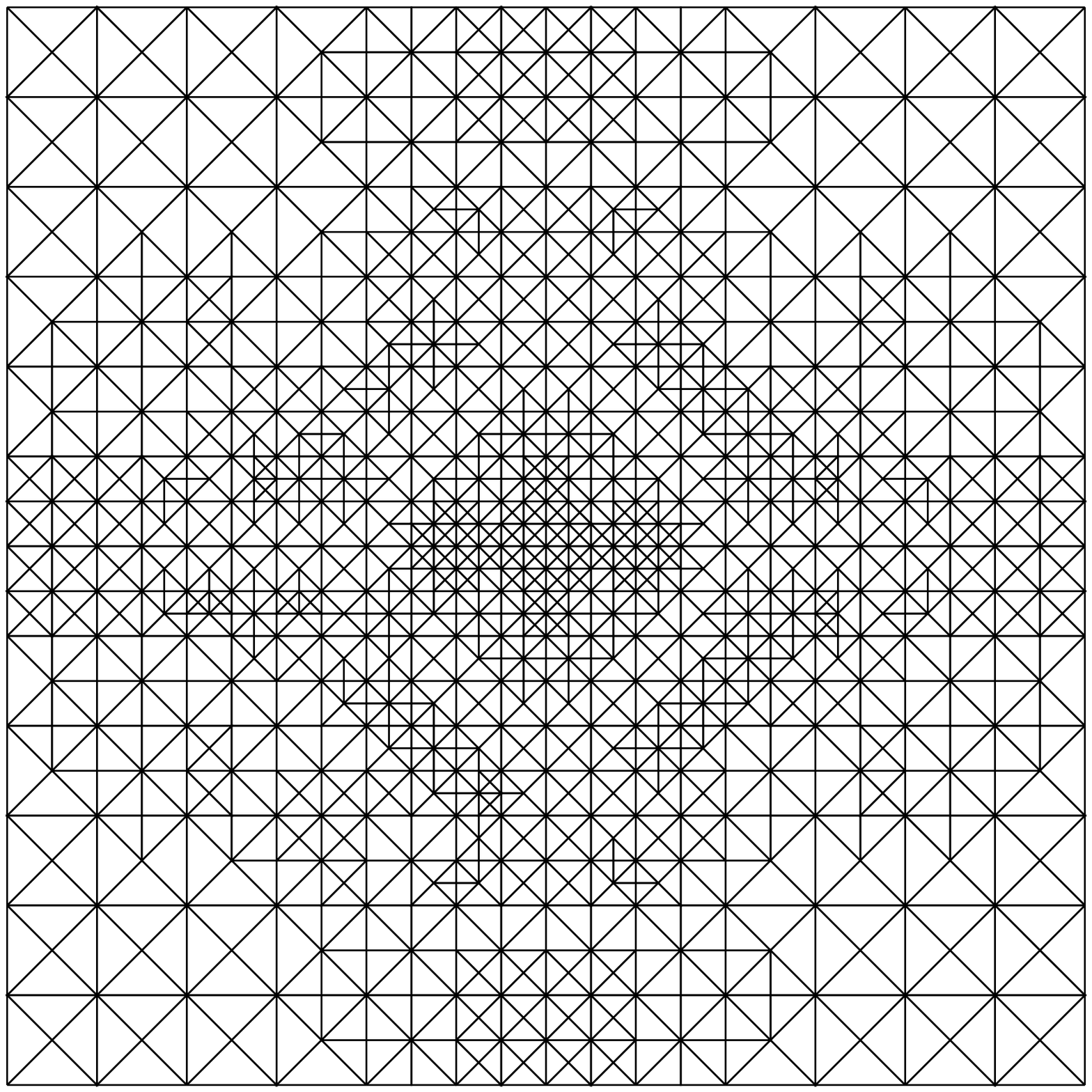}~\hfil~
\includegraphics[trim=0pt 0pt 0pt 0pt, 
clip=true,width=0.45\textwidth ]{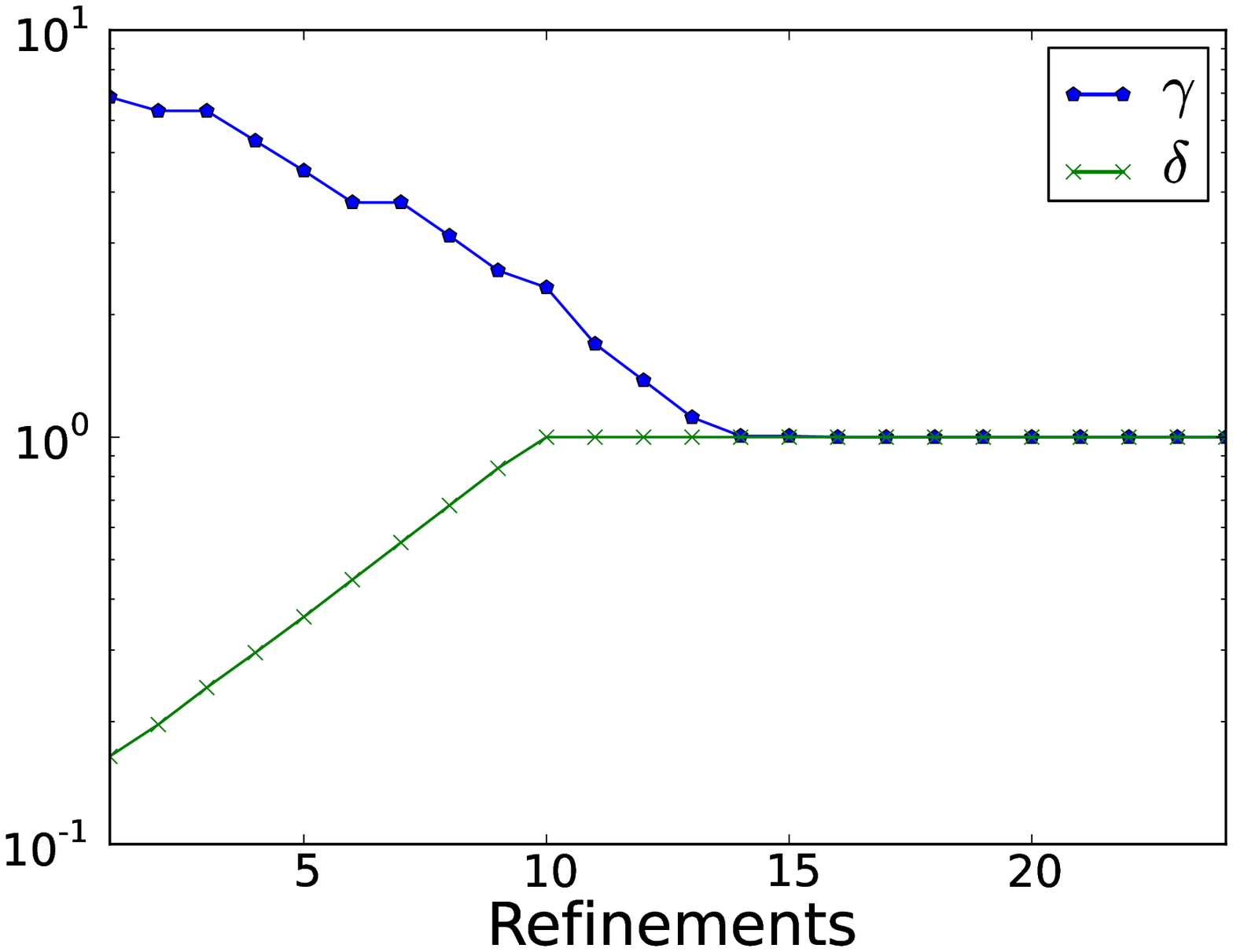}
\caption{Left: adpative mesh from Example~\ref{example3} with source
               ~\eqref{ex3:source2} after
               12 refinements with 1914 elements.
         Right: parameters $\gamma$ and $\delta$.}
\label{fig:ex3s2_mesh_gd}
\end{figure}
As in~\cite{GaMoZu11}, monotonicity and Lipschitz conditions
are established by observing there are 
positive constants $c$ and $C$ with 
$c \le \kappa(t^2) + 2t^2 \kappa'(t^2) \le C$, 
assuring a unique solution to the exact problem. 
This is not known for the approximate problems, particularly 
on the coarse meshes.
This example with source~\eqref{ex3:source1}
illustrates the phenomenon of multiple resets as thin 
layers in both the source $f(x)$ and the Jacobian are uncovered.
Figure~\ref{fig:ex3s1_error_gd} corresponding to Example~\ref{example3} with
source~\eqref{ex3:source1} shows several jumps up in the error corresponding to
the resets, a steep decrease as $\delta \goto 1$ and the discrete problem 
regains consistency, then steady decrease in the error at the predicted rate.
The plot on the right shows $\gamma$ stalling for several refinements after the 
last reset until the approximate Jacobian is sufficiently well resolved 
and Criteria~\ref{assume:gamma} for updating $\gamma$ are met before
Criteria~\ref{criteria:delta} determines the iterations should stop.  
Once necessary resolution is achieved, 
$\gamma$ reduces quickly to one and the residual is solved
to tolerance within a few iterations on subsequent refinements.

The decrease in the estimator is relatively steady in comparison to the 
$H^1$ error, and does not indicate that preliminary to the resets
the correction steps $w^n$ fail to update the solution $u^n$
to sufficiently decrease the residual $\nr{r^n}$. 
In this case the update steps $w^n$ display a loss of stability
due to a qualitative change in the problem data from the latest 
mesh refinement, albeit
at a magnitude small in comparison to $u^n$, and the reset is performed
before the solution is dominated by unstable fluctuations. 

For source~\eqref{ex3:source2} corresponding to the unknown solution, 
the estimator decreases similarly to the one for source~\eqref{ex3:source1}.
The adaptive mesh for the twelfth refinement, just as $\gamma \goto 1$ is 
shown on the left of Figure~\ref{fig:ex3s2_mesh_gd}, next to a plot
of parameters $\gamma$ and $\delta$.  The mesh shows the most dense 
refinement in the center, corresponding to the peak of the source, with additional
refinement along the discontinuities of its gradient.  Additional dense 
refinement is seen developing along the 
steep gradients of $\kappa(|\grad u|^2)$.
\section{Conclusion}\label{sec:conclusion}
An updated adaptive method for the numerical solution of quasilinear diffusion
problems with steep internal layers is presented, building on the
previous work by the author in~\cite{Pollock14a,Pollock15a}.
The goal of this method is to start the computation on a coarse mesh 
where the solution and variable dependent problem coefficients are not resolved,
and to partially solve the possibly ill-posed approximate 
coarse mesh problems up to sufficient 
tolerance to refine the mesh and eventually achieve a stable and efficient
computation of the solution to
an accurate representation of the problem.
The current results include explicit definitions for the added numerical
dissipation parameter $\gamma$ used to stabilize the iterations 
on the coarse mesh, 
and the introduction of an inexact method to further stabilize problems
where the residual is dominated by a variable dependent source. 
The scaling parameter $\delta$ of the inexact method is shown to reach one
which demonstrates consistency of the method, and the convergence of the residual
of the discrete nonlinear problem is established.
Under convergence of the residual, the parameter $\gamma$ is also shown
to reach one, establishing efficiency of the Newton-like iterations in 
the asymptotic regime. 
Future work by the author will include adapting the current method for
quasilinear convection problems and
nonhomogeneous boundary conditions,  as well as investigation of the stabilization 
properties of inexact linear solves. 


\bibliographystyle{siam}
\bibliography{refsTRN}



\end{document}